\numberwithin{equation}{section}
\newtheorem{theorem}{Theorem}[section]
\newtheorem{corollary}{Corollary}[section]
\newtheorem*{main}{Main Theorem}
\newtheorem{lemma}{Lemma}[section]
\newtheorem{proposition}{Proposition}[section]
\theoremstyle{definition}
\newtheorem{definition}{Definition}[section]
\newtheorem{remark}{Remark}[section]
\newtheorem{example}{Example}[section]
\newtheorem*{motivation}{Motivating Example}
\newcommand{\rmd}{\textnormal{d}}
\DeclareMathOperator{\Vect}{Vect}
\DeclareMathOperator{\Span}{Span}
\newcommand{\catname}[1]{\textnormal{\texttt{#1}}}
\font\black=cmbx10 \font\sblack=cmbx7 \font\ssblack=cmbx5 \font\blackital=cmmib10  \skewchar\blackital='177
\font\sblackital=cmmib7 \skewchar\sblackital='177 \font\ssblackital=cmmib5 \skewchar\ssblackital='177
\font\sanss=cmss10 \font\ssanss=cmss8 
\font\sssanss=cmss8 scaled 600 \font\blackboard=msbm10 \font\sblackboard=msbm7 \font\ssblackboard=msbm5
\font\caligr=eusm10 \font\scaligr=eusm7 \font\sscaligr=eusm5  \font\fraktur=eufm10
\font\sfraktur=eufm7 \font\ssfraktur=eufm5 
\font\bsymb=cmsy10 scaled\magstep2
\def\all#1{\setbox0=\hbox{\lower1.5pt\hbox{\bsymb
       \char"38}}\setbox1=\hbox{$_{#1}$} \box0\lower2pt\box1\;}
\def\exi#1{\setbox0=\hbox{\lower1.5pt\hbox{\bsymb \char"39}}
       \setbox1=\hbox{$_{#1}$} \box0\lower2pt\box1\;}
\def\tx#1{{\fam0\relax#1}}
\def\sss#1{{\fam\ssfam\relax#1}}
\def\hpb#1{\setbox0=\hbox{${#1}$}
    \copy0 \kern-\wd0 \kern.2pt \box0}
\def\vpb#1{\setbox0=\hbox{${#1}$}
    \copy0 \kern-\wd0 \raise.08pt \box0}
\def\pmb#1{\setbox0\hbox{${#1}$} \copy0 \kern-\wd0 \kern.2pt \box0}
\def\pmbb#1{\setbox0\hbox{${#1}$} \copy0 \kern-\wd0
      \kern.2pt \copy0 \kern-\wd0 \kern.2pt \box0}
\def\pmbbb#1{\setbox0\hbox{${#1}$} \copy0 \kern-\wd0
      \kern.2pt \copy0 \kern-\wd0 \kern.2pt
    \copy0 \kern-\wd0 \kern.2pt \box0}
\def\pmxb#1{\setbox0\hbox{${#1}$} \copy0 \kern-\wd0
      \kern.2pt \copy0 \kern-\wd0 \kern.2pt
      \copy0 \kern-\wd0 \kern.2pt \copy0 \kern-\wd0 \kern.2pt \box0}
\def\pmxbb#1{\setbox0\hbox{${#1}$} \copy0 \kern-\wd0 \kern.2pt
      \copy0 \kern-\wd0 \kern.2pt
      \copy0 \kern-\wd0 \kern.2pt \copy0 \kern-\wd0 \kern.2pt
      \copy0 \kern-\wd0 \kern.2pt \box0}
\mathchardef\za="710B  
\mathchardef\zb="710C  
\mathchardef\zg="710D  
\mathchardef\zd="710E  
\mathchardef\zve="710F 
\mathchardef\zz="7110  
\mathchardef\zh="7111  
\mathchardef\zvy="7112 
\mathchardef\zi="7113  
\mathchardef\zk="7114  
\mathchardef\zl="7115  
\mathchardef\zm="7116  
\mathchardef\zn="7117  
\mathchardef\zx="7118  
\mathchardef\zp="7119  
\mathchardef\zr="711A  
\mathchardef\zs="711B  
\mathchardef\zt="711C  
\mathchardef\zu="711D  
\mathchardef\zvf="711E 
\mathchardef\zq="711F  
\mathchardef\zc="7120  
\mathchardef\zw="7121  
\mathchardef\ze="7122  
\mathchardef\zy="7123  
\mathchardef\zf="7124  
\mathchardef\zvr="7125 
\mathchardef\zvs="7126 
\mathchardef\zf="7127  
\mathchardef\zG="7000  
\mathchardef\zD="7001  
\mathchardef\zY="7002  
\mathchardef\zL="7003  
\mathchardef\zX="7004  
\mathchardef\zP="7005  
\mathchardef\zS="7006  
\mathchardef\zU="7007  
\mathchardef\zF="7008  
\mathchardef\zW="700A  
\mathchardef\zC="7009  
\newcommand{\be}{\begin{equation}}
\newcommand{\ee}{\end{equation}}
\newcommand{\bea}{\begin{eqnarray}}
\newcommand{\eea}{\end{eqnarray}}
\def\*{{\textstyle *}}
\newcommand{\R}{{\mathbb R}}
\newcommand{\Z}{{\mathbb Z}}
\newcommand{\s}{{\textstyle *}}
\def\ul{\underline}
\def\Sec{\sss{Sec}}
\def\Vect{\sss{Vect}}
\def\sT{{\sss T}}
\def\xi{\tx{i}}
\def\s*{{\scriptstyle *}}
\def\cO{\mathcal{O}}
\def\ul{\underline}
\newcommand{\beas}{\begin{eqnarray*}}
\newcommand{\eeas}{\end{eqnarray*}}
\title{On a Grassmann odd analogue of Carrollian Manifolds}
\author{Andrew James Bruce}  
   \email{andrewjamesbruce@googlemail.com}
   \date{\today}
\begin{document}
\begin{abstract}
 We define a Grassmann odd analogue of a Carrollian manifold as a supermanifold of dimension $n|1$ with an even degenerate metric such that the kernel is generated by a non-singular odd vector field that is a supersymmetry generator. Alongside other results, we establish that the reduced manifold is a pseudo-Riemannian manifold, and show that compatible affine connections always exist, albeit they must carry torsion. As a physically relevant example, we examine an Inönü--Wigner contraction of the supertranslation algebra on standard superspace $\mathbb{R}^{4|4}$.    \par
\smallskip\noindent
{\bf Keywords:}{ Supergeometry;~Degenerate Metrics;~Connections}\par
\smallskip\noindent
{\bf MSC 2020:}{ 58A50;~58C50;~53B05} 
\end{abstract}

 \maketitle

\setcounter{tocdepth}{2}
 \tableofcontents
 
\begin{flushright}
\emph{“Curiouser and curiouser!” cried Alice }\\
Lewis Carroll, Alice’s Adventures in Wonderland,  (1865) 
\end{flushright}

\section{Introduction}
Carrollian manifolds are understood as manifolds equipped with a degenerate metric whose kernel is spanned by a nowhere vanishing complete vector field (see Duval et al. \cite{Duval:2014,Duval:2014a,Duval:2014b}, and the earlier works of Lévy-Leblond \cite{Lévy-Leblond:1965},  Sen Gupta \cite{SenGupta:1966}, and  Henneaux \cite{Henneaux:1979}). Carrollian geometry/physics has grown from a mathematical curiosity based on the non-relativistic limit $c \rightarrow 0$ to a subject of ongoing research. For a review of many of the facets of Carrollian physics, the reader may consult Bagchi et al. \cite{Bagchi:2025}. Intrinsic approaches to Carrollian geometry, so working with geometries not directly associated with a limit, have been developed, starting with the work of Duval et al. \cite{Duval:2014,Duval:2014a,Duval:2014b} (also see Henneaux \cite{Henneaux:1979}). Null hypersurfaces, such as punctured future or past light-cones in Minkowski spacetime, and the event horizon of a Schwarzschild black hole, are examples of Carrollian manifolds.\par
Supermanifolds offer the possibility of generalisations of classical geometries that are described by odd structures.  For example, the notion of odd connections on supermanifolds was studied by the author \& Grabowski \cite{Bruce:2020a}; Khudaverdian \& Peddie \cite{Khudaverdian:2016} provided a comparison between odd Riemannian and odd symplectic; and Khudaverdian \& Voronov \cite{Khudaverdian:2002} examined odd Laplace operators. Thus, as a mathematical question, the possibility of Grassmann odd analogues of Carrollian geometries is raised. Specifically, one can consider an odd vector field that generates the kernel of a degenerate metric. Shander \cite{Shander:1980} provides the local form of the (non-singular) odd vector fields\
\def\arraystretch{1.5}
\begin{tabular}{lll}
\textbf{Homological:}     &  $Q =  \frac{\partial}{\partial \tau}$, &$Q^2 = \mathbf{0}$, \\
\textbf{Supersymmetric:}  &  $Q = \frac{\partial}{\partial \tau} + \tau \frac{\partial}{\partial t}$,& $Q^2 = \frac{\partial}{\partial t}$,
\end{tabular}\par
\noindent where $t$ is an even coordinate and $\tau$ an odd coordinate. It is the supersymmetric generalisation that we study here, as the non-integrable distribution is quite at odds with the classical situation. Moreover, Vaintrob \cite{Vaintrob:1996} has shown that if a supermanifold admits a non-singular homological vector field, then the supermanifold is a trivial odd line bundle. We will restrict our attention to even degenerate metrics, although odd metrics can be studied; they seem less relevant in physics\footnote{The author is not aware of any direct application of odd Riemannian metrics in physics. This is in contrast to the use of odd symplectic structures in the BV formalism. See Khudaverdian \cite{Khudaverdian:2004} and references therein.}.\par
The core concept introduced in the paper is that of a \emph{super-null Riemannian manifold}. That is, a supermanifold with a single odd direction that is equipped with a degenerate (even) Riemannian metric $g$, such that the kernel of the metric is generated by a non-singular odd vector field $Q$, that satisfies $Q^2 \neq \mathbf{0}$; see Definition \ref{def:SUSYCarBun}. In particular,  $[Q, Q] = 2 P$, and $[P,Q]=0$, and so we have the $N = 1$, $d =1$ supertranslation algebra describing the kernel of the degenerate Riemannian metric.  We avoid the nomenclature ``super-Carrollian manifolds’’ or similar to avoid confusion with supersymmetric extensions of Carrollian geometry and/or the Carroll group.  We stress that the structures we study are not associated with Carrollian superalgebras as studied in the context of Carrollian supergravities, see for example, Bergshoeff, Gomis, \& Parra \cite{ Bergshoeff:2016},  Ravera \& Zorbra \cite{Ravera:2023}, and Grumiller, Montecchio, \& Nejati \cite{Grumiller:2024}, and references therein.  \par
We define the \emph{super-null Lie algebra} as the Lie superalgebra of infinitesimal automorphisms of a super-null Riemannian manifold, i.e.,  vector fields $X$ that satisfy $\mathcal{L}_X g =0$ and $\mathcal{L}_XQ =0$. One observation is that this Lie superalgebra is pure even if $P$ is not Killing, and if $P$ is Killing, the odd elements are spanned by the supersymmetric covariant derivative. Affine connections on a super-null Riemannian manifold are carefully studied. Supersymmetric and metric compatible affine connections are defined and examined; the former are connections for which the vector field $Q$ is parallel, and the latter are the natural generalisation of metric compatible connections to degenerate metrics. As the metric on a super-null Riemannian manifold is degenerate, there is no direct generalisation of the fundamental theorem of Riemannian supergeometry. We prove that both supersymmetric compatible and metric compatible connections exist on any super-null Riemannian manifold; however, they are not uniquely fixed by the metric and odd vector field. Moreover, due to the distribution given by the kernel of the metric being non-integrable, these connections must carry torsion.

\medskip
\begin{motivation}\label{exm:SUSY} Starting with flat superspace, one can construct a super-null Riemannian manifold using an Inönü–Wigner contraction (see \cite{Inönü:1953}). Detailed definitions and presentation of conventions can be found throughout Subsections \ref{subsec:MetricsAndConnections}  and \ref{subsec:SUSYBundles}. We will follow the conventions of \cite{Bruce:2024} for spinors and gamma matrices. Consider $N=1$, $d=4$ flat superspace $\R^{4|4}$, with global coordinates $(x^\mu,\theta_\alpha)$, where the odd coordinates are real Majorana spinors. The supersymmetry generators are
\begin{equation}\label{eqn:D4SUSY}
Q^\alpha = \frac{\partial}{\partial \theta_\alpha} + \theta_\beta (C\gamma^\mu)^{\beta \alpha} \frac{\partial}{\partial x^\mu}\,.
\end{equation}
The (graded) commutator is
$$[Q^\alpha,Q^\beta] = 2\,(C\gamma^\mu)^{\beta\alpha} P_\mu,
\qquad P_\mu = \frac{\partial}{\partial x^\mu}\,.$$
To take an Inönü–Wigner contraction, we separate spatial and temporal directions and write
$$
(x^\mu,\theta_\alpha) = (x^i,t,\theta_a,\tau)\,.
$$
We then rescale the spatial coordinates
\begin{equation}
x^i \longmapsto \hat{x}^i := c^{-1} x^i\,,
\qquad \theta_a \longmapsto \hat{\theta}_a := \sqrt{c}\,\theta_a\,,
\end{equation}
where $c$ is the speed of light, regarded as a contraction parameter. The temporal coordinates $(t,\tau)$ are not rescaled. Furthermore, we rescale the generators
\begin{equation}
Q^a \longmapsto \hat{Q}^a := \sqrt{c} \,Q^a\,, \qquad P_i \longmapsto \hat{P}_i := c\, P_i\,,
\end{equation}
to ensure that the (graded) commutators remain finite in the limit $c\to 0$; the rescaled generators vanish except for the surviving pair
$$Q := Q^\tau = \frac{\partial}{\partial \tau} + \tau \frac{\partial}{\partial t}\,,  \qquad P := P_t = \frac{\partial}{\partial t}\,.$$
One finds that the non-trivial bracket is
$$[Q,Q] = 2\, P\,,$$
which is precisely the $d=1$, $N=1$ supertranslation algebra. After the contraction, we interpret these vector fields as being vector fields on $\R^{4|1}\subset \R^{4|4}$ (using an abuse of notation), which is defined by setting $\theta_a =0$, and so comes with global coordinates $(x^i, t , \tau)$.  A canonical choice of degenerate metric (not unique) whose kernel is generated by $Q$ is
$$g   = \rmd x^i \otimes \rmd x^j \, \delta{ji}  + 2 \, \rmd t \otimes \rmd \tau \, \tau - \rmd t \otimes \rmd t\,.$$
Here we have employed the convention that $\rmd \tau$ is odd, meaning that the degenerate metric is even. Using the linearity of the associated pseudo-inner product, we observe that
\begin{align*}
& \langle Q|  \partial_i\rangle = \langle \partial\tau | \partial_i\rangle + \tau\, \langle \partial_t | \partial_i\rangle =0\,,\\
&\langle Q|  \partial_t\rangle = \langle \partial_\tau | \partial_t\rangle + \tau\, \langle \partial_t | \partial_t\rangle =\tau -\tau=0\,,\\
& \langle Q|  \partial_\tau\rangle = \langle \partial_\tau | \partial_\tau\rangle + \tau\, \langle \partial_t | \partial_\tau\rangle =\tau^2=0\,,
\end{align*}
which demonstrates that the kernel of $g$ is generated by $Q$. Notice that the reduced metric is the Minkowski metric on $\R^4$.
\begin{remark}
It must be stressed that the Inönü–Wigner contraction employed above does \ul{not} yield a Carrollian structure as the underlying Poncaré group is not contracted.  That is, we do not have a super-generalisation of a Carrollian structure, but rather an odd analogue thereof in that the fundamental vector field is now odd.
\end{remark}
The above constructions will form the basis of examples \ref{exa:SUSYLie} and \ref{exa:Connection}.
\end{motivation}
\par
The general definition of a super-null Riemannian manifold (see Definition \ref{def:SUSYCarBun})  covers the above example; however, there will be no assumption that the supermanifold necessarily arises from an Inönü–Wigner contraction. That is, we take an intrinsic point of view. \par
\medskip
\noindent \textbf{Key Results.}
\begin{itemize}[itemsep=0.5em]
\item \hyperref[mtrm:ComConExist]{Main Theorem} - affine connections that are both supersymmetric compatible and metric compatible exist on any super-null Riemannian manifold, and they carry non-vanishing torsion;
\item Proposition \ref{prop:LocalFormg} - the local form of the degenerate metric in Shander coordinates is presented;
\item Proposition \ref{prop:RedMisRie} - the reduced metric is non-degenerate, that is, underlying a super-null Riemannian manifold is a pseudo-Riemannian manifold;
\item Proposition \ref{prop:DNotKilling} - the odd vector field Q cannot be Killing;
\item Proposition \ref{prop:SCarrAlg} + Corollary \ref{cor:SCarrAlfFinite} - the  super-null Lie algebra is finite dimensional.
\end{itemize}
\medskip
\noindent \textbf{Arrangement.} In Section \ref{sec:Main} we proceed with the bulk of this paper. We recall the fundamental theory of supermanifolds equipped with (degenerate) metrics and connections in Subsection \ref{subsec:MetricsAndConnections}.  In Subsection \ref{subsec:SUSYBundles}, the concept of a super-null Riemannian manifold is presented, and some immediate results are given. The super-null Lie algebra is studied in Subsection \ref{subsec:SupCarLieAlg}. The question of compatible affine connections is addressed in Subsection \ref{subsec:ConnSUSYBun}. We end in Section \ref{sec:ConRem} with some concluding remarks.
%
%
\section{Degenerate Metrics, Connections and Supersymmetric Structures}\label{sec:Main}
\subsection{Even Metrics and Connections on Supermanifolds}\label{subsec:MetricsAndConnections}
We will assume that the reader has some familiarity with the category of (real and finite-dimensional) supermanifolds $\catname{SMan}$.  We understand a \emph{supermanifold} $M := (|M|, :  \cO_{M})$ of dimension $n|m$  to be a supermanifold as defined by Berezin \& Leites  \cite{Berezin:1976,Leites:1980}, i.e., we take the locally ringed space approach. For an overview of the general theory of supermanifolds, the reader may consult, for example, \cite{Carmeli:2011,Manin:1997,Varadrajan:2004}. Underlying any supermanifold is a smooth manifold that we will denote $M_{red} = (|M|, C^\infty_{|M|}(-))$. An incomplete list of works on Riemannian supermanifolds includes \cite{Galaev:2012,Garnier:2014,Garnier:2012,Goertsches:2008,Groeger:2014,Klaus:2019}. The warning from the outset is that we will include degenerate metrics in our definition.
\begin{definition}
A \emph{metric} on a supermanifold $M$ is an even, rank $2$, ($\Z_2$-graded) symmetric covariant tensor $g \in \Sec\big( \sT^* M \otimes \sT^* M\big)$.
\end{definition}
In the local coordinate frame, we write
$$g = \rmd x^a \otimes \rmd x^b \,g_{ba}(x)\,,$$
where we have assigned the parity $\widetilde{\rmd x^a} = \widetilde{a}$. Thus, $\widetilde{g_{ba} } = \widetilde{a} + \widetilde{b}$. Under changes of coordinates $x^a \mapsto x^{a'}(x)$ the local frame  and components of the metric transforms as
$$\rmd x^{a'} = \rmd x^a \left(\frac{\partial x^{a'}}{\partial x^a}\right)\,, \qquad g_{b'a'}(x') = (-1)^{\widetilde{a}' \, \widetilde{b}} \left(\frac{\partial x^b}{\partial x^{b'}}\right) \left(\frac{\partial x^a}{\partial x^{a'}}\right)~  g_{ab}\,,$$
where we have explicitly used the symmetry $g_{ab} = (-1)^{\widetilde{a} \, \widetilde{b}} \, g_{ba}$. The \emph{(semi-)inner product} associated with $g$ is locally given by
\begin{equation}
\langle X| Y \rangle =  (-1)^{\widetilde{Y}\, \widetilde{a}} ~ X^a(x)Y^b(x) g_{ba}(x)\,.
\end{equation}
We have the following properties that can be checked directly:
\begin{enumerate}[itemsep=0.5em]
\item$\widetilde{\langle X| Y \rangle} = \widetilde{X} + \widetilde{Y}$;
\item $\langle X| Y \rangle = (-1)^{\widetilde{X} \, \widetilde{Y}} \langle Y| X \rangle$;
\item $\langle f X + Y| Z \rangle = f \langle X|Z \rangle + \langle Y| Z\rangle$;
\item $\langle \partial_a | \partial_b \rangle = g_{ab}$,
\end{enumerate}
\medskip
for all (homogeneous) $X,Y,Z \in \Vect(M)$ and $f \in C^\infty(M)$. Extension of these properties to inhomogeneous vector fields is by linearity.  A vector field $X \in \Vect(M)$ is said to be a \emph{null vector field} if  $\langle X | X \rangle =0$. Note that odd vector fields are automatically null vector fields as, via the symmetry of the inner product  $\langle X | X \rangle =  -\, \langle X | X \rangle =0$.
\begin{definition}
Let $g$ be a metric on a supermanifold $M$. The \emph{kernel of the metric} $g$ is the $C^\infty(M)$-module
$$\ker(g) :=  \big \{ X \in \Vect(M)~~|~~ \langle X |Y\rangle   =0\,, ~\textnormal{for all} ~~Y \in \Vect(M) \big \}\,.$$
A metric $g$ is said to be \emph{non-degenerate} if $\ker(g) = {\mathbf{0}}$, and is said to be \emph{degenerate} otherwise.
\end{definition}
\begin{definition}
A pair $(M,g)$, where $M$ is a supermanifold and $g$ is a metric is said to be
\begin{enumerate}[itemsep=0.5em]
\item a \emph{Riemannian supermanifold} if $g$ is non-degenerate; and
\item a \emph{degenerate Riemannian supermanifold} is $g$ is degenerate.
\end{enumerate}
\end{definition}
\begin{remark}
Odd metrics can also be similarly defined; however, we will not discuss them in this paper.  It is well known that a Riemannian supermanifold (with an even metric) must have dimensions $n|2m$.
\end{remark}
Killing vector fields are defined in the same way as in classical Riemannian geometry.
\begin{definition}\label{def:KillVF}
Let $(M, g)$ be a (degenerate) Riemannian supermanifold. A vector field $X \in \Vect(M)$ is said to be a Killing vector field if
$$\mathcal{L}_X g =0\,.$$
\end{definition}
A useful expression for the Lie derivative of the metric is
\begin{equation}\label{eqnm:LieDerMet}
(\mathcal{L}_X g)(Y,Z) = X \langle Y|Z\rangle - \langle [X,Y]| Z\rangle - (-1)^{\widetilde{X} \widetilde{Y}}\,  \langle Y| [X,Z]\rangle \,,
\end{equation}
for all $X,Y, Z \in \Vect(M)$. Naturally, this local expression is identical to the classical one up to some sign factors.
\begin{proposition}\label{prop:KillingLieBracket}
The set of all Killing vector fields on (degenerate) Riemannian supermanifold $(M,g)$ forms a Lie algebra with respect to the standard Lie bracket of vector fields on $M$.
\end{proposition}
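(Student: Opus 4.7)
The plan is to mirror the classical Riemannian argument, with the graded commutator replacing the ordinary one. The graded subspace structure is immediate: if $X,Y$ are Killing of the same parity and $a \in \R$, then $\mathcal{L}_{aX+Y} g = a\,\mathcal{L}_X g + \mathcal{L}_Y g = 0$, so the Killing vector fields form a $\Z_2$-graded $\R$-subspace of $\Vect(M)$. What remains is closure under the graded Lie bracket, which hinges entirely on the identity
$$\mathcal{L}_{[X,Y]}\, T \;=\; \mathcal{L}_X \mathcal{L}_Y\, T \;-\; (-1)^{\widetilde{X}\widetilde{Y}}\, \mathcal{L}_Y \mathcal{L}_X\, T$$
for homogeneous $X,Y \in \Vect(M)$ and an arbitrary graded covariant tensor $T$. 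Applying both sides to $T = g$ yields $\mathcal{L}_{[X,Y]} g = 0$ whenever $\mathcal{L}_X g = 0 = \mathcal{L}_Y g$, which completes the closure step.

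For the identity itself, I would argue abstractly: $\mathcal{L}_X$ is a graded derivation of the tensor algebra of parity $\widetilde{X}$, and the graded commutator of two such derivations is again a graded derivation of parity $\widetilde{X} + \widetilde{Y}$. Both $[\mathcal{L}_X, \mathcal{L}_Y]$ (graded) and $\mathcal{L}_{[X,Y]}$ are derivations of the same parity, and they agree on functions (since $\mathcal{L}_X f = X(f)$ reduces the claim to the very definition of $[X,Y]$ on $\Vect(M)$) and on vector fields (where the statement is exactly the graded Jacobi identity on $\Vect(M)$). Because these generate the tensor algebra, the identity propagates to every tensor, in particular to $g$. A more elementary route, if one prefers, is to expand $(\mathcal{L}_{[X,Y]}g)(Z,W)$ via formula \eqref{eqnm:LieDerMet}, substitute the same formula into $(\mathcal{L}_X \mathcal{L}_Y g)(Z,W)$ and $(\mathcal{L}_Y \mathcal{L}_X g)(Z,W)$, and check cancellation term by term using the graded Jacobi identity on $\Vect(M)$; this is purely mechanical sign bookkeeping.

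Finally, the graded antisymmetry $[X,Y] = -(-1)^{\widetilde{X}\widetilde{Y}}[Y,X]$ and the graded Jacobi identity for $[\cdot,\cdot]$ on $\Vect(M)$ descend to the Killing subspace for free, so what is actually obtained is a Lie superalgebra structure. I do not expect any real obstacle here: the argument never uses the non-degeneracy of $g$, so it applies verbatim to the degenerate setting that concerns the paper. The only point demanding genuine care is reconciling the sign conventions of the graded commutator of Lie derivatives with those adopted for the bracket on $\Vect(M)$, and this amounts to checking one sign rather than solving any real difficulty.
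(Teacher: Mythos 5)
Your proposal is correct and follows exactly the paper's route: the paper's entire proof is the remark that the claim ``follows in complete parallel with the classical case using $L_{[X,Y]}=[L_X,L_Y]$,'' which is precisely the graded-commutator identity you establish and apply to $g$. Your write-up merely supplies the details the paper leaves implicit (the derivation argument for the identity and the observation that neither step uses non-degeneracy of $g$), so there is nothing further to add.
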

\begin{proof}
This follows in complete parallel with the classical case using $\mathcal{L}_{[X,Y]} = [\mathcal{L}_X, \mathcal{L}_Y]$.
\end{proof}
The notion of an affine connection on a supermanifold is more or less the same as that of an affine connection on a manifold.
\begin{definition}\label{def:AffCon}
An \emph{affine connection} on a supermanifold is a parity-preserving map
\begin{align*}
\nabla : &  ~ \Vect(M) \times \Vect(M) \longrightarrow \Vect(M)\\
& (X,Y) \mapsto \nabla_X Y
\end{align*}
that satisfies the following:
\begin{itemize}
\item Bi-linearity
\begin{align*}
& \nabla_X(Y+Z) = \nabla_X Y + \nabla_X Z\,,
&&\nabla_{X +Y} Z = \nabla_X Z + \nabla_Y Z\,,
\end{align*}
\item $C^\infty(M)$-linearity in the first argument
$$\nabla_{fX} Y = f \, \nabla_X Y\,,$$
\item The Leibniz rule
$$\nabla_X fY = X(f) Y + (-1)^{\widetilde{X} \, \widetilde{f}} ~ f \, \nabla_X Y\,,$$
\end{itemize}
for all  (homogeneous) $X,Y,Z \in \Vect(M)$ and $f \in C^\infty(M)$. Extension to inhomogeneous vector fields is by linearity.
\end{definition}
Affine connections exist on any (real) supermanifold. This can be proved by adapting the standard arguments, i.e., affine connections are local operators and the existence of a partition of unity\footnote{See \cite[Lemma 3.1.7 and Corollary 3.1.8]{Leites:1980} for the existence of partitions of unity and bump functions on supermanifolds.}.
\begin{definition}\label{def:SymCon}
Let $\nabla$ be an affine connection on a supermanifold $M$.  The \emph{torsion tensor} of an affine connection  $T_\nabla : \Vect(M)\otimes_{C^\infty(M)} \Vect(M) \rightarrow \Vect(M)$   is defined as
$$T_\nabla(X,Y) := \nabla_X Y - (-1)^{\widetilde{X} \, \widetilde{Y} } ~ \nabla_Y X - [X,Y]\,,$$
for any homogeneous $X, Y \in \Vect(M)$. An affine connection is said to be \emph{symmetric}  or \emph{torsion-free} if the torsion vanishes.
\end{definition}
\begin{definition}\label{def:RieCurTensor}
Let $\nabla$ be an affine connection on a supermanifold $M$.  The \emph{Riemann curvature tensor} of an affine connection  $R_\nabla : \Vect(M)\otimes_{C^\infty(M)} \Vect(M)\otimes_{C^\infty(M)} \Vect(M) \rightarrow \Vect(M)$   is defined as
$$R_\nabla(X,Y) := \nabla_X (\nabla_Y Z) - (-1)^{\widetilde{X}\, \widetilde{Y}}\, \nabla_Y (\nabla_X Z) - \nabla_{[X,Y]}Z\,,$$
for any homogeneous $X, Y \in \Vect(M)$ and $Z \in \Vect(M)$. An affine connection is said to be \emph{flat} if the Riemann curvature tensor vanishes.
\end{definition}
\begin{definition}\label{def:MetComp}
An affine connection $\nabla$ on a (degenerate) Riemannian supermanifold $(M,g)$ is said to be \emph{metric compatible} if
$$X\langle Y|Z \rangle  = \langle \nabla_X Y|Z \rangle  + (-1)^{\widetilde{X}\, \widetilde{Y}} ~ \langle Y|\nabla_X Z \rangle\,,$$
for any $X, Y, Z \in \Vect(M)$.
\end{definition}
\begin{remark}
The fundamental theorem of Riemannian geometry generalises directly to the case of Riemannian supermanifolds. That is, there is a unique metric compatible and torsion-free affine connection on an even Riemannian supermanifold, i.e., the Levi-Civita connection. However, there is, in general, no analogue for degenerate metrics; such connections may not exist, and if they do, they are usually not unique.
\end{remark}
%
%
\subsection{Super-Null Riemannian Manifolds}\label{subsec:SUSYBundles}
Generalising the definition of a Carrollian manifold (see Duval et al. \cite{Duval:2014,Duval:2014a,Duval:2014b}), we make the following definition.
\begin{definition}\label{def:SUSYCarBun}
A \emph{super-null Riemannian manifold} is a quadruple $(M, g, Q, P)$, where
\begin{enumerate}[itemsep=0.5em]
\item $M =(|M|, \cO_M)$ is a supermanifold of dimension $n|1$;
\item $(M, g)$ is a degenerate (even) Riemannian supermanifold;
\item $Q \in \Vect(M)$  is a non-singular odd vertical vector field  such that $[Q,Q] = 2 \,P$, where $P$ is an even vector field on $M$,
\end{enumerate}
subject to the compatibility condition $\ker(g) = \Span {Q}$.\par 
A \emph{morphism of super-null Riemannian manifolds} $\Phi :(M, g, Q, P) \rightarrow (M', g', Q', P') $ is a diffeomorphism  $\Phi : M \rightarrow M'$, such that
\begin{enumerate}[itemsep=0.5em]
\item $g = \Phi^* g'$; and
\item  $\Phi_* Q = Q'$.
\end{enumerate}
The resulting \emph{category of super-null Riemannian manifolds} is denoted $\catname{SNullMan}$. The \emph{group of automorphism of a super-null Riemannian manifold} we denote as $\mathrm{SNull}(M, g, Q, P)$, and the associated  \emph{super-null Lie algebra} $\mathfrak{snull}(M, g, Q, P)$ is the Lie superalgebra of vector fields $X \in \Vect(M)$ that satisfy
$$\mathcal{L}_X g =0\,, \qquad  \mathcal{L}_X Q =0\,.$$
\end{definition}
\noindent \textbf{Observations:}
\begin{enumerate}[itemsep=0.5em]
\item the rank of the kernel is constant, which ensures the kernel is well-defined as a locally free module;
\item the supermanifold $M$ we take to be of dimension $n|1$, with $n \geq 1$, this is minimal case and parallels $N=1$ SUSY-curves. Moreover, $M$ is an odd line bundle, i.e., locally $M$ is of the form $|U| \times \R^{0|1}$;
\item the Lie superalgebra here is $[Q,Q] = 2\,P$, $[P,Q] =0$ and $[P,P] =0$, i.e., the $N=1$, $d=1$ supertranslation algebra;
\item Shander’s theorem \cite{Shander:1980} means that in the neighbourhood of any point $m \in |M|$, adapted coordinates can always be found, which we will refer to as \emph{Shander coordinates}, $(x^a, t, \tau)$, such that  the vector fields take the canonical form
$$Q = \frac{\partial}{\partial \tau} + \tau \frac{\partial}{\partial t}\,, \qquad P = \frac{\partial}{\partial t}\,,$$
where $\widetilde{x^a} = \widetilde{t} = 0$ and $\widetilde{\tau} = 1$;
\item morphisms equate the kernels, i.e., $\Phi^* \ker(g) = \ker(g')$, and $\Phi^* P = P'$ as the push-forward by a diffeomorphism is a homomorphism of Lie brackets;
\item if $X \in \mathfrak{snull}(M, g, Q, P) $, then $\mathcal{L}_X P = [[X,Q], Q] =0$.
\end{enumerate}
\begin{small}
\noindent \textbf{Aside.} \emph{Conformal morphisms} of super-Carrollian manifolds are defined as diffeomorphisms $\Phi : M \rightarrow M'$ such that 
\begin{enumerate}[itemsep=0.5em]
\item $\Phi^* g' = \lambda^2 \, g$, where $\lambda \in C^\infty(M)$ is even and nowhere vanishing; and 
\item $\Phi_* \ker(g) = \ker(g')$. \label{eqn:ConMorKer}
\end{enumerate}
Condition (\ref{eqn:ConMorKer}) implies $\Phi_* Q = \mu \, Q$, where $\mu \in C^\infty(M)$ is even and nowhere vanishing. As the pushforward by a diffeomorphism respects the Lie bracket of vector fields, a quick calculations shows that $\Phi_* P = \big(\Phi^*(\mu)\big)^2 \, P' + \Phi^*(\mu) \, Q'\big(\Phi^* \mu \big)$.
\end{small}
\begin{proposition}\label{prop:LocalFormg}
Let $(M, g, Q, P)$ be a super-null Riemannian manifold. Then in Shander coordinates $(x^a, t , \tau)$, the most general form of the degenerate metric $g$ is
\begin{align*}
g &= \rmd x^a \otimes \rmd x^b \, g_{ba}(x,t)  + 2\, \rmd x^a \otimes \rmd t \,  g_{ta}(x,t) \\
&- 2 \, \rmd x^a \otimes \rmd \tau\, \tau \, g_{ta}(x,t) -  2\,\rmd t \otimes \rmd \tau\, \tau \, g_{tt}(x,t) + \rmd t \otimes \rmd t \, g_{tt}(x,t)\,.
\end{align*}
\end{proposition}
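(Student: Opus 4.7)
The plan is to expand the metric $g$ in the general Shander coordinate frame, impose the graded symmetry and parity constraints on the component functions, and then use the kernel condition $\ker(g) = \Span\{Q\}$ to pin down the $\tau$-dependent coefficients. Writing
$$g \;=\; \rmd x^A \otimes \rmd x^B \, g_{BA},$$
with capital indices ranging over $\{x^a, t, \tau\}$, the graded symmetry $g_{AB} = (-1)^{\tilde A \tilde B} g_{BA}$ immediately gives $g_{\tau\tau} = -g_{\tau\tau} = 0$, together with $g_{ab} = g_{ba}$, $g_{ta} = g_{at}$, $g_{\tau a} = g_{a\tau}$ and $g_{\tau t} = g_{t\tau}$.

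Next I would use the evenness of $g$. In Shander coordinates every function decomposes uniquely as $f(x,t) + \tau h(x,t)$, so evenness forces $g_{ab}$, $g_{ta}$ and $g_{tt}$ to depend only on the even coordinates $(x,t)$, while $g_{\tau a}$ and $g_{\tau t}$ must each be $\tau$ times an even function of $(x,t)$. Invoking $C^\infty(M)$-linearity of the pseudo-inner product in the first slot together with $Q = \partial_\tau + \tau\partial_t$, one has
$$\langle Q \mid \partial_c \rangle \;=\; g_{\tau c} + \tau\, g_{tc}$$
for every coordinate vector field $\partial_c$. Setting this to zero yields $g_{\tau a} = -\tau\, g_{ta}(x,t)$ and $g_{\tau t} = -\tau\, g_{tt}(x,t)$; the case $c=\tau$ is automatic once $g_{\tau\tau}=0$. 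That these two relations exhaust the kernel condition follows because $\langle Q\mid\,\cdot\,\rangle$ is $C^\infty(M)$-linear in the right-hand slot up to a sign, so vanishing on a coordinate frame is equivalent to vanishing on all of $\Vect(M)$.

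Substituting back and combining the off-diagonal pairs via graded symmetry (so that $\rmd x^a \otimes \rmd t\, g_{ta} + \rmd t \otimes \rmd x^a\, g_{ta}$ is written as $2\,\rmd x^a \otimes \rmd t\, g_{ta}$ in the symmetrised convention, and likewise for the $\tau$-dependent pairs) produces the displayed expression. The only delicate point I foresee is bookkeeping of the sign factors when repackaging the mixed even--odd terms $g_{\tau a}$ and $g_{\tau t}$; the minus signs in front of the two $\tau$-dependent summands in the statement come entirely from the solution $g_{\tau c} = -\tau\, g_{tc}$ rather than from the graded symmetry itself. This is really careful bookkeeping rather than a conceptual obstacle.
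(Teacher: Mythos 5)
Your proposal is correct and follows essentially the same route as the paper: write the general even symmetric rank-2 tensor in Shander coordinates (with parity forcing the mixed even--odd components to be $\tau$ times functions of $(x,t)$), then impose $\langle Q \mid \partial_A\rangle = 0$ on the coordinate frame and use $C^\infty(M)$-linearity to conclude $g_{\tau a} = -\tau\, g_{ta}$ and $g_{\tau t} = -\tau\, g_{tt}$. Your explicit derivation of the starting ansatz from graded symmetry and evenness is, if anything, slightly more detailed than the paper, which simply asserts that ansatz as the most general form.
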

\begin{proof}
In Shander coordinates $(x^a, t, \tau)$, the most general form of a (possibly degenerate) metric on $M$ is of the form
\begin{align*}
g =& \rmd x^a \otimes \rmd x^b \, g_{ba}(x,t)  + 2\, \rmd x^a \otimes \rmd t \,  g_{ta}(x,t) \\
&+ 2\, \rmd x^a \otimes \rmd \tau \,  \tau \, h_{\tau a}(x,t)  +  2\, \rmd t \otimes \rmd \tau \,  \tau \, h_{\tau t}(x,t) + \rmd t \otimes \rmd t \,  g_{tt}(x,t)  \,.
\end{align*}
Using $C^\infty(M)$-linearity, we need only examine the following
\begin{itemize}[itemsep=0.5em]
\item $\langle Q | \partial_a \rangle = \langle \partial_\tau | \partial_a \rangle + \tau \langle \partial_t | \partial_a \rangle =   \tau \, h_{\tau a} +  \tau \,g_{t a}$;
\item $\langle Q | \partial_t \rangle = \langle \partial_\tau | \partial_t \rangle + \tau \langle \partial_t | \partial_t \rangle =  \tau \, h_{\tau t} +  \tau \, g_{t t}$;
\item $\langle Q | \partial_\tau \rangle = \langle \partial_\tau | \partial_\tau \rangle + \tau \langle \partial_t | \partial_\tau \rangle =   \tau (\tau \, h_{\tau t}) =0$.
\end{itemize}
For the above to vanish, we require
$$h_{\tau a} = - g_{ta}\,, \qquad  h_{\tau t } = - g_{t t}\,,$$
and then substituting this into the general possible form of $g$ establishes the result.
\end{proof}
\begin{remark}
Locally, $\tau Q = \tau \partial_\tau$, and so these vector fields are not linearly independent. Thus,  $\langle Q | \partial_\tau \rangle =0$ is consistent with the rank of $\ker(g)$ being generated by $Q$. Moreover, we have not yet placed constraints on the local components of the metric.
\end{remark}
Recall that there is a canonical morphism of sheaves of unital superalgebras associated that defines the manifold $M_{red} := \big(|M|, C^\infty_{|M|}(-)\big)$; notationally we set $\epsilon_{-} :\cO_M(-) \rightarrow C^\infty_ {|M|}(-)$. A lift of $\bar{X}, \bar{Y} \in \Vect(M_{red})$ is defined as  any even $X,Y \in \Vect(M)$ such that $\bar{X} = X \circ \epsilon_{|M|}$ and $\bar{Y} = Y \circ \epsilon_{|M|}$. Such vector fields $X$ and $Y$ can always be found using an atlas of $M$.  Then we define a \emph{reduced metric} on $M_{red}$ as
$$\langle \bar{X} | \bar{Y}\rangle_{M_{red}}:=  \epsilon_{|M|}\big(  \langle X|Y \rangle \big)\,,$$
which, a priori, may be a degenerate. Using coordinates on $M_{red}$ induced by Shander coordinates $(x^a, t, \tau)$, the metric $g_{red}$ is of the form
$$g_{red} = \rmd x^a \otimes \rmd x^b \, g_{ba}(x,t)  + 2\, \rmd x^a \otimes \rmd t \,  g_{ta}(x,t)  + \rmd t \otimes \rmd t \,  g_{tt}(x,t)\,.$$
\begin{example}\label{exm:dim2NDgen}
Consider $\R^{2|1}$ equipped with global coordinates $(x, t, \tau)$, and the degenerate metric
$$g = \rmd x \otimes \rmd x - 2 \, \rmd t \otimes \rmd \tau \, \tau + \rmd t \otimes \rmd t\,.$$
The reader can quickly check that we do indeed have a super-null Riemannian manifold, i.e., the kernel of $g$ is generated by $Q$. The reduced metric is $g_{red} = \rmd x \otimes \rmd x + \rmd t \otimes \rmd t$, which is non-degenerate.
\end{example}
\begin{proposition}\label{prop:RedMisRie}
Let $(M, g, Q, P)$ be a super-null Riemannian manifold. Then the reduced manifold $M_{red}$ is a pseudo-Riemannian manifold.
\end{proposition}
\begin{proof}
As the question of degeneracy can be addressed locally, we will employ Shander coordinates $(x^a,t,\tau)$,  so that $Q=\partial_\tau+\tau\partial_t$ and $P=\partial_t$.
By Proposition \ref{prop:LocalFormg}, the reduced metric has the form
$$g_{red} = \rmd x^a \otimes \rmd x^b \, g_{ba}(x,t)  + 2\, \rmd x^a \otimes \rmd t \,  g_{ta}(x,t)  + \rmd t \otimes \rmd t \,  g_{tt}(x,t)\,.$$
\noindent \ul{$n= 1$}:  Degenerate metrics  are locally of the form
$$g  = - 2\, \rmd t \otimes \rmd \tau \, \tau g_{tt}(t) + \rmd t \otimes \rmd t \, g_{tt}(t)\,,$$
where $g_{tt}$ is a smooth function $t$. To have a constant rank kernel, it must be the case that $g_{tt}$ is nowhere vanishing.  The reduced metric is $g_{red} = \rmd t \otimes \rmd t \,  g_{tt}(t)$, which is non-degenerate. \par
\noindent \ul{$n\geq 2$}: As a block matrix we have
$$g_{red}=\begin{pmatrix}g_{ab} & g_{a t}\\ g_{t b} & g_{tt}\end{pmatrix}\,,$$
where the entries are ordinary smooth functions in $x^a$ and $t$. Whenever $g_{ab}$ is invertible, we may factor the determinant by the Schur complement:
$$ \det (g_{red})=\det(g_{ab})\cdot S\,,\qquad  S:=g_{tt}-g_{t a},(g_{ab})^{-1}g_{b t}\,.$$
Note that $S$ is a scalar, and so $\det(S) = S$, meaning the above is just the block matrix expression for the determinant. \par
\medskip
\noindent \textbf{Claim 1:} for $n \geq 2$, $\det(g_{ab})\neq 0$.
We will prove this via a contradiction. Let us assume that there exists a $U = U^a \partial_a$ such that $U^ag_{ab}=0$. Then any vector field $X = X^a \partial_a + X^t\partial_t + X^\tau \partial_\tau$ we have
$$\langle U | X\rangle = U^a X^b \,g_{ba} + U^a X^t g_{ta}\,.$$
Next, consider a new vector field $U' = U +  \lambda, P$, where $\lambda$ is an even function. Then
$$\langle U' | X\rangle = U^a X^b \,g_{ba} + U^a X^t g_{ta} + \lambda \, \big(X^a g_{at} + X^tg_{tt} \big)\,.$$
For $\langle U' | X\rangle$ to vanish, we require
$$U^ag_{at} + \lambda \, g_{tt}=0\,, \qquad \lambda \, g_{at} =0\,.$$\
\noindent \textit{Case 1:} consider $g_{at}=0$, then we have $U^ag_{at} =0$ and thus  $\langle U | X\rangle =0$ for all $X$. This violates the condition  $\ker(g)$ is of rank $0|1$.\\
\noindent \textit{Case 2:} consider  $g_{at} \neq 0$ and $g_{tt} \neq 0$, then $\lambda =0$ and this forces $U^ag_{at} =0$ and thus  $\langle U | X\rangle =0$ for all $X$. This violates the condition  $\ker(g)$ is of rank $0|1$.\par
\medskip
Thus, there is no such $U$ and thus $g_{ab}$ is invertible.\par
\bigskip
\noindent \textbf{Claim 2:} for $n \geq 2$, The Schur scalar $S$ is nonzero. Claim 1 establishes that $g_{ab}$ is invertible, and we will denote the inverse as $g^{ab}$, as standard. The  Schur scalar  is $S = g_{tt}- g_{ta}g^{ab}g_{bt}$. If $S =0$, then there exists a vector field $V \in \ker(g_{red})$. Every such even vector field has a canonical lift, also denoted here by $V$, as it does not depend on the odd coordinate. Thus, $\epsilon(\langle V | X\rangle) =0$ for all lifts $X$, and so $\langle V | X\rangle =0$. Thus, $V$ is in the kernel of $g$. However, this violates the condition on the kernel’s rank. This implies it must be the case that $S \neq 0$. \par
\bigskip
Thus, for $n\geq 2$, $\det(g_ {red})\neq 0$ and so $g_{red}$ defines a pseudo-Riemannian structure on $M_{red}$.\end{proof}
\begin{example}
Let $(M_0, g_0)$ be a (pseudo-)Riemannian manifold with $\dim M_0 \geq 2$. Then given a nowhere vanishing function $f\in C^\infty(M_0)$ we have a warped product of non-degenerate metrics on $M_{red} := M_0 \times \R$ given by
$$g_{red} := g_0 \oplus g_\R \, f^2\,, $$
where $g_\R$ is the constant metric on $\R$. Then $M = M_0 \times \R^{1|1}$ is a super-null Riemannian manifold with the degenerate metric in Shander coordinates being of the form
$$g = \rmd x^a \otimes \rmd x^b g_{ba}(x) - 2 \, \rmd t \otimes \rmd \tau \, \tau\, f^2(x) + \rmd t \otimes \rmd t \, f^2(x)\,.$$
\end{example}
%
%
\subsection{The Super-Null Lie Algebra}\label{subsec:SupCarLieAlg}
The super-null Lie algebra was defined earlier as the infinitesimal automorphisms of a super-null Riemannian manifold (see Definition \ref{def:SUSYCarBun}). The Lie superalgebra $\mathfrak{snull}(M, g, Q, P)$ is defined as the Lie superalgebra of vector fields $X \in \Vect(M)$ that satisfy $\mathcal{L}_X g =0$ and $\mathcal{L}_X Q =0$. In this subsection, we examine the structure of this Lie superalgebra and establish that it is finite dimensional.
\begin{proposition}\label{prop:DNotKilling}
Let $(M, g, Q, P)$ be a super-null Riemannian manifold. Then $Q$ cannot be a Killing vector field, i.e., $\mathcal{L}_Q g \neq 0$.  
\end{proposition}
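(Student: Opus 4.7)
The plan is to test the Killing equation on the pair $(Q,Z)$ for arbitrary $Z \in \Vect(M)$ and extract from it a condition on $P$ that cannot hold. I would apply formula \eqref{eqnm:LieDerMet} with $X = Y = Q$, remembering that since $Q$ is odd, the sign prefactor $(-1)^{\widetilde{Q}\widetilde{Q}}$ equals $-1$. Two of the three terms collapse immediately because $Q$ spans $\ker(g)$: the scalar $\langle Q | Z \rangle$ vanishes identically (so its derivative along $Q$ is zero), and likewise $\langle Q | [Q,Z]\rangle = 0$. The only surviving contribution comes from $\langle [Q,Q] | Z\rangle = 2\,\langle P | Z \rangle$, yielding
$$(\mathcal{L}_Q g)(Q,Z) \;=\; -2\,\langle P | Z\rangle.$$

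If $Q$ were Killing, the right-hand side would vanish for every $Z \in \Vect(M)$, forcing $P \in \ker(g)$. Since $\ker(g)$ is the rank-$0|1$ locally free $C^\infty(M)$-module generated by $Q$, this means $P = fQ$ for some $f\in C^\infty(M)$, and parity considerations (the left-hand side even, $Q$ odd) force $f$ to be odd.

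It remains to rule this out, which I would do in Shander coordinates $(x^a,t,\tau)$, where $Q = \partial_\tau + \tau\,\partial_t$ and $P = \partial_t$. Every odd smooth function has the local form $f = \tau\, h(x,t)$, whence
$$fQ \;=\; \tau\,h\,\partial_\tau + \tau^2 h\,\partial_t \;=\; \tau\,h\,\partial_\tau,$$
using $\tau^2 = 0$. Such an expression can never equal $\partial_t = P$, since $P$ has no $\partial_\tau$-component and is nowhere vanishing. This contradiction establishes the claim.

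The argument is quite short; the only minor pitfall is keeping the sign $(-1)^{\widetilde{Q}\widetilde{Q}} = -1$ straight when pairing the odd vector field with itself, and ensuring the reduction from $P \in \ker(g)$ to the local impossibility is carried out rigorously via Shander's theorem rather than dimensional hand-waving. I do not foresee any substantive obstacle.
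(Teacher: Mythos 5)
Your proof is correct and takes essentially the same route as the paper's: both evaluate $(\mathcal{L}_Q g)(Q,Z)$ via the Lie-derivative formula, use $Q \in \ker(g)$ to kill two of the three terms, and reduce the Killing condition to $\langle P \,|\, Z\rangle = 0$ for all $Z$. The only difference is that you explicitly justify $P \notin \ker(g)$ via the Shander-coordinate/parity argument, whereas the paper simply asserts this fact; your extra detail is a welcome tightening rather than a different approach.
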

\begin{proof}
Using \eqref{eqnm:LieDerMet} we observe that for an arbitrary $X \in\Vect(M)$ and using $[Q,Q] = 2 \,P$
$$(\mathcal{L}_Q g)(Q,X) = Q\langle Q| X \rangle - \langle [Q,Q]| X \rangle +  \langle Q| [Q,X] \rangle  =  - 2 \, \langle P |X \rangle\,.$$
Thus, as $P$ is not in the kernel of $g$ we cannot have $(\mathcal{L}_Q g)(Q,X) = 0$ for all $X$, and so $\mathcal{L}_Q g\neq 0$.
\end{proof}
\begin{corollary}
Let $(M, g, Q, P)$ be a super-null Riemannian manifold. The odd vector field $Q$ is not an element of $\mathfrak{snull}_1(M, g, Q, P)$.
\end{corollary}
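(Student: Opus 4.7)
The plan is to derive this as an immediate consequence of Proposition \ref{prop:DNotKilling}. By definition, a homogeneous vector field $X \in \Vect(M)$ belongs to $\mathfrak{scarr}(M, g, Q, P)$ precisely when $\mathcal{L}_X g = 0$ and $\mathcal{L}_X Q = 0$, and the odd subspace $\mathfrak{scarr}_1(M, g, Q, P)$ is the intersection of this set with the odd vector fields on $M$. Since $Q$ is odd by hypothesis, showing $Q \notin \mathfrak{scarr}_1$ reduces to showing that at least one of the two defining conditions is violated by $Q$.

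I would invoke the first condition and observe that $\mathcal{L}_Q g \neq 0$ is exactly the content of Proposition \ref{prop:DNotKilling}. This already establishes the corollary. As an independent sanity check, I would also point out that the second condition fails: $\mathcal{L}_Q Q = [Q,Q] = 2P$, and the definition of a super-Carrollian manifold enforces $[Q,Q] \neq \mathbf{0}$ (equivalently $Q^2 \neq \mathbf{0}$ in the Shander classification), so $P$ is a genuinely non-zero even vector field and $\mathcal{L}_Q Q \neq 0$. Either failure is sufficient to conclude $Q \notin \mathfrak{scarr}_1(M, g, Q, P)$.

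There is no real obstacle here; the nontrivial content was done in Proposition \ref{prop:DNotKilling}, which relies on the local Lie-derivative formula \eqref{eqnm:LieDerMet} applied to the pair $(Q,X)$ and on the fact that $P \notin \ker(g) = \Span\{Q\}$, forced by the constant rank $0|1$ of the kernel together with $P$ being even and non-vanishing. The corollary is therefore essentially a relabelling of that non-vanishing statement, restricted to the odd component of the super-Carrollian Lie algebra.
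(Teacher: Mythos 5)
Your proof is correct and matches the paper's (implicit) argument: the corollary is stated as an immediate consequence of Proposition \ref{prop:DNotKilling}, since membership in $\mathfrak{scarr}(M,g,Q,P)$ requires $\mathcal{L}_X g = 0$ and $Q$ fails this. Your additional observation that $\mathcal{L}_Q Q = [Q,Q] = 2P \neq \mathbf{0}$ independently violates the second defining condition is also valid and consistent with the paper's standing assumption that $P$ is non-zero.
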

\begin{remark}
Proposition \ref{prop:DNotKilling} should be contrasted with the situation in Carrollian geometry, where the fundamental vector field can be Killing for the degenerate metric. Riemannian supermanifolds with a Killing homological vector field were the subject of \cite{Bruce:2020}. 
\end{remark}
We can interpret Proposition \ref{prop:DNotKilling} as saying that if $Q$ were killing, then $\ker(g)$ must include $P$; and so the rank of the kernel must be (at least) $1|1$. However, this directly violates the requirements of the definition of a super-null Riemannian manifold. It is important to note that $P$ may be Killing, and this does not destroy the super-null Riemannian manifold structure. If $P$ is a Killing vector field, then the local components of the degenerate metric in Shander coordinates are independent of the even coordinate $t$.
\begin{lemma}\label{lem:supCarrLieAlg}
Let $(M, g, Q, P)$ be a super-null Riemannian manifold. In Shander coordinates $(x^a, t, \tau)$, elements of $\Vect(M)$ that (super)commute with $Q = \partial_\tau + \tau \, \partial_t$ are of the form\\
\begin{tabular}{ll}
$X = X^a(x)\partial_a + X^t(x)\partial_t$ & $\in \Vect_0(M)$\,,\\
$Y = f(x)\, D$ & $\in \Vect_1(M)$\,,
\end{tabular}\\
where $D= \partial_\tau - \tau \, \partial_t$ is the supersymmetic covariant derivative.
\end{lemma}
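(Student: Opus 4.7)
The plan is to work in Shander coordinates $(x^a,t,\tau)$ and compute $[Z,Q]$ for a general homogeneous $Z \in \Vect(M)$. Because $\tau^2 = 0$, parity forces every even vector field to take the form
\[
X = \alpha^a(x,t)\partial_a + \alpha^t(x,t)\partial_t + \gamma(x,t)\,\tau\partial_\tau,
\]
and every odd one to take the form
\[
Y = \delta^a(x,t)\,\tau\partial_a + \delta^t(x,t)\,\tau\partial_t + f(x,t)\,\partial_\tau.
\]
The key initial move is the super-Jacobi identity: from $[Q,Q]=2P$, any $Z$ with $[Z,Q]=0$ satisfies $[Z,P]=0$. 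Since $P=\partial_t$ in Shander coordinates, this forces every component of $Z$ to be independent of $t$, eliminating all $\partial_t$-derivatives of coefficients in the subsequent calculation.

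For the even case, I would observe that with $t$-independent coefficients the only surviving contribution to $[X,Q]$ comes from the $\gamma(x)\tau\partial_\tau$ term, because $[\alpha^a(x)\partial_a,Q]$ and $[\alpha^t(x)\partial_t,Q]$ vanish identically. Direct expansion yields $[\gamma\tau\partial_\tau,\partial_\tau]=-\gamma\,\partial_\tau$ and $[\gamma\tau\partial_\tau,\tau\partial_t]=\gamma\,\tau\partial_t$, so
\[
[X,Q] \;=\; -\gamma\,(\partial_\tau - \tau\partial_t) \;=\; -\gamma D.
\]
Since $D$ is a nowhere-vanishing odd vector field generating a rank $0|1$ free submodule, $-\gamma D=0$ forces $\gamma=0$, leaving $X=\alpha^a(x)\partial_a+\alpha^t(x)\partial_t$.

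For the odd case, $[Y,Q]$ is the graded anti-commutator $YQ+QY$. The brackets $[\delta^a\tau\partial_a,\tau\partial_t]$ and $[\delta^t\tau\partial_t,\tau\partial_t]$ vanish because every term contains a factor $\tau^2$, while $[f\partial_\tau,\partial_\tau]=0$ follows from $\partial_\tau^2=0$. The surviving contributions are $[\delta^a\tau\partial_a,\partial_\tau]=\delta^a\partial_a$, $[\delta^t\tau\partial_t,\partial_\tau]=\delta^t\partial_t$, and $[f\partial_\tau,\tau\partial_t]=f\,\partial_t$ (the potential $\tau\partial_t(f)\partial_\tau$ correction drops out because $\partial_t f=0$). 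Summing,
\[
[Y,Q] \;=\; \delta^a\partial_a + (\delta^t+f)\partial_t \;=\; 0,
\]
which forces $\delta^a=0$ and $\delta^t=-f$, giving $Y = f(x)(\partial_\tau-\tau\partial_t) = f(x)\,D$. The main obstacle is tracking Koszul signs when expanding super-brackets whose entries carry coefficients of mixed parity with an explicit $\tau$; the Jacobi reduction to $[Z,P]=0$ is precisely what tames this bookkeeping by removing all $t$-derivatives of coefficients at the outset.
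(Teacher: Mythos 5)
Your proof is correct and follows essentially the same route as the paper: write the general homogeneous vector field in Shander coordinates and compute its bracket with $Q$ directly, reading off the vanishing conditions on the components. The only (valid and mildly streamlining) difference is your use of the graded Jacobi identity on $[Q,Q]=2P$ to conclude $[Z,P]=0$ and hence $t$-independence of the coefficients up front, whereas the paper extracts the conditions $\partial_t X^a=\partial_t X^t=0$ and $\partial_t Y^\tau=0$ from the bracket computation itself.
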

\begin{proof}
For even vector fields, using Shander coordinates the general from is $X = X^a(x,t)\partial_a + X^t(x,t)\partial_t + \tau \, X^\tau(x,t)\partial_\tau$. We then observe that
\begin{align*}
[X,Q] &= [X^a \partial_a, \partial_\tau] + [X^a \partial_a, \tau  \, \partial_t]+ [X^t \partial_t, \partial_\tau] + [X^t \partial_t, \tau  \, \partial_t] + [\tau \,X^\tau \partial_\tau, \partial_\tau] + [\tau \,X^\tau \partial_\tau, \tau  \, \partial_t]\\
&= - \tau \, \partial_t X^a\partial_a - \tau \, \partial_t X^t\partial_t - X^\tau \partial_\tau + \tau X^\tau \partial_t\,.
\end{align*}
For the above to be zero, we have $X^\tau =0$, $\partial_t X^a =0$, and $\partial_t X^t =0$.\\
\medskip
For odd vector fields, the general form in Shander coordinates is $Y = \tau\, Y^a(x,t)\partial_a + \tau \, Y^t(x,t)\partial_t + Y^\tau(x,t)\partial_\tau$. We then observe that 
\begin{align*}
[Y,Q] &= [\tau \,Y^a \partial_a, \partial_\tau] + [\tau \, Y^a \partial_a, \tau  \, \partial_t]+ [\tau\, Y^t \partial_t, \partial_\tau] + [\tau Y^t \partial_t, \tau  \, \partial_t] + [ Y^\tau \partial_\tau, \partial_\tau] + [Y^\tau \partial_\tau, \tau  \, \partial_t]\\
&= Y^a\partial_a + Y^t \partial_t + Y^\tau\partial_t + \tau \partial_t Y^\tau \partial_\tau\,.
\end{align*}
For the above to be zero, we have $Y^a =0$, $\partial_t Y^\tau =0$, and $Y^\tau = - Y^t$, and so setting $Y^t = - f(x)$ establishes the result.
\end{proof}
\begin{proposition}\label{prop:SCarrAlg}
Let $(M, g, Q, P)$ be a super-null Riemannian manifold and let  $\mathfrak{snull}(M, g, Q, P)$ be its super-null Lie algebra. 
\begin{enumerate}[itemsep=0.5em]
\item The Lie algebra $\mathfrak{snull}_0(M, g, Q, P)$ is isomorphic to the Lie algebra of Killing vector fields of $(M_{red}, g_{red})$ that commute with $P_{red}$.
\item If $P$ is Killing, then  $\mathfrak{snull}_1(M, g, Q, P) = \Span_{\R}\{D\}$, otherwise  $\mathfrak{snull}_1(M, g, Q, P) = \{\mathbf{0}\}$. 
\end{enumerate}
\end{proposition}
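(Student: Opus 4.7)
The plan is to exploit Lemma~\ref{lem:supCarrLieAlg} to pin down the form of any $X \in \mathfrak{scarr}$, and then impose the Killing equation $\mathcal{L}_X g = 0$ on these normal forms.

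For the even part, I would start from the Lemma's conclusion that $X = X^a(x)\partial_a + X^t(x)\partial_t$ in Shander coordinates; the independence of $X$ from $t$ and $\tau$ gives both $[X, P] = 0$ automatically and a canonical descent $\overline{X} \in \Vect(M_{red})$ through $\epsilon_{|M|}$. The heart of part~(1) is then the equivalence $\mathcal{L}_X g = 0 \Leftrightarrow \mathcal{L}_{\overline{X}} g_{red} = 0$. For the forward direction, a direct expansion in Shander coordinates shows that the components $(\mathcal{L}_X g)(\partial_I, \partial_J)$ with $I,J \in \{a,t\}$ are literally the reduced Killing equations, and by Proposition~\ref{prop:LocalFormg} the $\partial_\tau$-entries are slaved to these (e.g., $(\mathcal{L}_X g)(\partial_a, \partial_\tau) = -\tau\,(\mathcal{L}_{\overline{X}} g_{red})(\partial_a, \partial_t)$, and $(\mathcal{L}_X g)(\partial_\tau, \partial_\tau) = 0$ automatically). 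The reverse direction amounts to lifting $\overline{X}$ trivially in $\tau$, which preserves the Lie-bracket structure.

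For the odd part, Lemma~\ref{lem:supCarrLieAlg} forces $Y = f(x)\,D$ with $f$ a function of the spatial coordinates only. A direct calculation using $\mathcal{L}_{fD}\,g = f\,\mathcal{L}_D\,g + \rmd f \otimes_s (i_D g)$ together with Proposition~\ref{prop:LocalFormg} reduces $\mathcal{L}_Y g = 0$ to the Shander-coordinate system
\begin{align*}
f\,\partial_t g_{tt} &= 0, \\
f\,\partial_t g_{ta} + 2\,\partial_a f\, g_{tt} &= 0, \\
f\,\partial_t g_{ab} + 2\,\partial_a f\, g_{tb} + 2\,\partial_b f\, g_{ta} &= 0,
\end{align*}
with every $\partial_\tau$-component of $\mathcal{L}_Y g$ vanishing automatically. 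Setting $f \equiv 1$ shows $D$ itself is Killing if and only if every $\partial_t g_{IJ}$ vanishes, i.e., if and only if $P$ is Killing. Assuming $P$ is Killing, the system collapses to $\partial_a f\,g_{tt} = 0$ and $\partial_a f\,g_{tb} + \partial_b f\,g_{ta} = 0$, whose diagonal specialisation yields $\partial_a f\,g_{ta} = 0$. If $\rmd f$ were non-zero at some point $p$, all $g_{ta}(p)$ and $g_{tt}(p)$ would vanish, and by Proposition~\ref{prop:LocalFormg} $g(P, \partial_I)|_p = 0$ for every $I$; this would place the even vector field $P$ into $\ker(g) = \Span\{Q\}$, contradicting the fact that even elements of $\Span\{Q\}$ are exactly those of the form $h\tau\partial_\tau$. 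Hence $f$ is locally, and by connectedness of $M$ globally, constant, yielding $\mathfrak{scarr}_1 = \Span_{\mathbb{R}}\{D\}$.

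The step I expect to be the main obstacle is the ``otherwise'' direction, namely showing that $f \equiv 0$ whenever $P$ fails to be Killing. The plan here is to combine the Lie-superalgebra identity $[Y, Y] = -2f^2 P \in \mathfrak{scarr}_0$ (which via part~(1) makes $-2f^2 P_{red}$ a Killing vector field on $(M_{red}, g_{red})$ commuting with $P_{red}$) with a local analysis of the three equations on the open set $\{f \neq 0\}$, where they determine each $\partial_t g_{IJ}$ entirely in terms of $\rmd \log|f|$. One would then attempt to use the non-degeneracy of $g_{red}$ for $n \neq 2$ (Proposition~\ref{prop:RedMisRie}) together with the rigidity of Killing vector fields proportional to $P_{red}$ on the induced pseudo-Riemannian structure to force $f \equiv 0$.
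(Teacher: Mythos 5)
Your treatment of part (1) and of the ``$P$ Killing'' half of part (2) follows essentially the same route as the paper: reduce to the normal forms of Lemma \ref{lem:supCarrLieAlg}, expand $\mathcal{L}_X g$ in Shander coordinates, and observe that the $\partial_\tau$-components are slaved to the reduced ones. That part is correct, and your derivation that $\rmd f = 0$ (if $\rmd f \neq 0$ at a point then all $g_{ta}$ and $g_{tt}$ vanish there, putting $P$ pointwise into the kernel and violating the rank condition) is actually more careful than the paper's bare assertion that $\partial_a f = 0$ is required.

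The genuine gap is exactly where you flagged it: the ``otherwise'' direction, for which you only offer a plan. That plan cannot be completed, because on $\{f \neq 0\}$ the three equations do not force $\partial_t g_{IJ} = 0$; they merely prescribe $\partial_t g_{IJ}$ in terms of $\rmd \log f$ and $g$ itself, and these $t$-ODEs admit nontrivial solutions. Concretely, on $\R^{3|1}$ with Shander coordinates $(x,y,t,\tau)$ take
\begin{equation*}
g_{red} \;=\; (4t^2+1)\,\rmd x \otimes \rmd x \;+\; \rmd y \otimes \rmd y \;-\; 2t\,\big(\rmd x \otimes \rmd t + \rmd t \otimes \rmd x\big) \;+\; \rmd t \otimes \rmd t\,,
\end{equation*}
which has $\det g_{red} = 1$, and let $g$ be the associated super-Carrollian metric of Proposition \ref{prop:LocalFormg}. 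Then $P = \partial_t$ is not Killing (since $\partial_t g_{xx} = 8t$), yet $f = e^{x}$ solves all three of your equations: $f\,\partial_t g_{tx} + 2(\partial_x f)\,g_{tt} = e^{x}(-2) + 2e^{x} = 0$ and $f\,\partial_t g_{xx} + 4(\partial_x f)\,g_{tx} = 8te^{x} - 8te^{x} = 0$, the remaining components being trivially zero. Hence $e^{x}D \in \mathfrak{scarr}_1$ while $P$ fails to be Killing, so the dichotomy in part (2) is itself false as stated; note that the paper's own argument for this case (that $P$ not Killing makes \emph{every} component $\partial_t g_{IJ}$ nonzero) is not valid either, so you have not missed an argument that exists in the paper. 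What your computation actually establishes is the corrected statement: $\mathfrak{scarr}_1$ is the space of all $fD$ with $f(x)$ solving the displayed linear system, which reduces to $\Span_{\R}\{D\}$ when $P$ is Killing but need not vanish otherwise.
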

\begin{proof} We will use Lemma \ref{lem:supCarrLieAlg} and Shander coordinates.
\begin{enumerate}[itemsep=0.5em]
\item Note if $[X, Q] =0$, then the even vector field must locally be of the form $X = X^a(x)\partial_a + X^t(x)\partial_t$, and in particular there is no $\tau$ component. Moreover, $X$ is projectable to $M_{red}$, the associated vector field on $M_{red}$ we denote by $X_{red}$.   If $X$ is Killing for $g$, then 
$$(\mathcal{L}_X g)_{ab} =0\, , \qquad (\mathcal{L}_X g)_{ta} =0\, , \qquad (\mathcal{L}_X g)_{tt} =0\,, $$
which implies that $X_{red}$ is Killing for $g_{red}$ (see Proposition \ref{prop:RedMisRie}). Then using Proposition \ref{prop:LocalFormg}, the further conditions for $X$ to be Killing are
$$(\mathcal{L}_X g)_{\tau a} \propto \tau \, (\mathcal{L}_X g)_{ta} =0\, , \qquad (\mathcal{L}_X g)_{\tau t } \propto \tau \,(\mathcal{L}_X g)_{tt} =0\,, $$
which are automatically satisfied. Thus, $X$ is fully determined by $X_{red}$ being Killing for $g_{red}$ subject to $[P_{red}, X_{red}] =0$, i.e., Killing vector fields that are locally independent of $t$.
\item Note  that if $[Y, Q]=0$, then the non-zero components of the odd vector field are $Y^t = - \tau \, f(x)$ and $Y^\tau = f(x)$. We observe that to be Killing we require
\begin{align*}
(\mathcal{L}_Y g)_{ba} &= f \, \partial_t g_{ba} +  2(\partial_a f) g_{tb} +2(\partial_b f) g_{ta} =0\,,\\
(\mathcal{L}_Y g)_{ta} & = f \, \partial_t g_{ta} +  2(\partial_a f) g_{tt} = 0\,,\\
(\mathcal{L}_Y g)_{tt} &= - \tau f(x)\, \partial_t g_{tt} =0\,. 
\end{align*}
\begin{itemize}
\item If $P$ is Killing, then $\partial_t g_{ba} =0$, $\partial_t g_{ta}=0$, and $\partial_t g_{tt}=0$. Then for $Y$ to be Killing we require $\partial_a f =0$, and so $f = \mathrm{const}$.  Thus, $\mathfrak{snull}_1(M, g, Q, P) = \Span_{\R}\{D\}$.
\item If $P$ is not  Killing, then $\partial_t g_{ba} \neq 0$, $\partial_t g_{ta}\neq0$, and $\partial_t g_{tt}\neq 0$. Then for $Y$ to be Killing the condition $f =0$ is forced. Thus, $\mathfrak{snull}_1(M, g, Q, P) = \{\mathbf{0}\}$.
\end{itemize}
\end{enumerate}
\end{proof}
As $(M_{red}, g_{red})$ is a pseudo-Riemannian manifold (see Proposition \ref{prop:RedMisRie}), the dimension of the Lie algebra of $g_{red}$ is bounded by $n(n+1)/2$, we have the following corollary of Proposition \ref{prop:SCarrAlg}.
\begin{corollary}\label{cor:SCarrAlfFinite}
Let $(M, g, Q, P)$ be a super-null Riemannian manifold, then the Lie superalgebra $\mathfrak{snull}(M, g, Q, P)$ is finite-dimensional.
\end{corollary}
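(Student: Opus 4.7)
The plan is to reduce the statement to a direct combination of Proposition \ref{prop:SCarrAlg} and Proposition \ref{prop:RedMisRie}. Since $\mathfrak{scarr}(M,g,Q,P)$ is $\Z_2$-graded as a Lie superalgebra, it suffices to bound $\dim \mathfrak{scarr}_0$ and $\dim \mathfrak{scarr}_1$ separately and sum the results. All the structural work has already been carried out; the role of the hypothesis $\dim M_{red} \neq 2$ is merely to guarantee that the reduced metric is genuinely pseudo-Riemannian so that classical bounds apply.

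The odd piece is immediate. By Proposition \ref{prop:SCarrAlg}(2), either $\mathfrak{scarr}_1(M,g,Q,P) = \Span_{\R}\{D\}$ (when $P$ is Killing) or $\mathfrak{scarr}_1(M,g,Q,P) = \{\mathbf{0}\}$ (when $P$ is not Killing). In either case $\dim \mathfrak{scarr}_1(M,g,Q,P) \leq 1$, with no further argument required.

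For the even piece, Proposition \ref{prop:SCarrAlg}(1) identifies $\mathfrak{scarr}_0(M,g,Q,P)$ with the Lie subalgebra of $(M_{red}, g_{red})$-Killing vector fields that additionally commute with $P_{red}$. Here I would invoke Proposition \ref{prop:RedMisRie} together with Example \ref{exm:n=1} for the edge case $n=1$: under the standing hypothesis $n := \dim M_{red} \neq 2$, the reduced metric $g_{red}$ is non-degenerate, so $(M_{red}, g_{red})$ is a true pseudo-Riemannian manifold. The classical upper bound on the dimension of the isometry algebra of a connected pseudo-Riemannian $n$-manifold then gives $\dim \operatorname{Kill}(M_{red}, g_{red}) \leq n(n+1)/2$, and restricting to the subalgebra commuting with $P_{red}$ only sharpens this inequality.

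Combining the two bounds yields $\dim \mathfrak{scarr}(M,g,Q,P) \leq n(n+1)/2 + 1$, which is manifestly finite. I do not anticipate any real obstacle: the only potential pitfall is ensuring that the $n=2$ exception of Proposition \ref{prop:RedMisRie} is genuinely excluded so that the classical Killing-algebra bound is applicable, and this is precisely the role of the hypothesis $\dim M_{red} \neq 2$ in the statement. One could optionally remark that in the non-connected case the bound is applied componentwise, but this is not essential for finite-dimensionality.
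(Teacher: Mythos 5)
Your proposal is correct and follows essentially the same route as the paper: the odd part is bounded by $1$ via Proposition \ref{prop:SCarrAlg}(2), and the even part embeds into the Killing algebra of the pseudo-Riemannian manifold $(M_{red},g_{red})$ (non-degenerate by Proposition \ref{prop:RedMisRie} since $\dim M_{red}\neq 2$), which is classically bounded by $n(n+1)/2$. No gaps.
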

\begin{remark}
The finite dimensionality of $\mathfrak{snull}(M, g, Q, P)$ is to be contrasted with infinitesimal automorphisms of weak Carrollian manifolds where the Lie algebra is infinite-dimensional (see Duval et al. \cite{Duval:2014}).
\end{remark}
Given the role of $P$ in determining the nature of the infinitesimal automorphisms, we make the following definition.
\begin{definition}
A super-null Riemannian manifold $(M,g, Q,P)$ such that $P$ is Killing, i.e., $\mathcal{L}_P g =0$, is referred to as a \emph{static super-null Riemannian manifold}. 
\end{definition}
Note that for any static super-null Riemannian manifold $P \in\mathfrak{snull}_0(M, g, Q, P)$ as $\mathcal{L}_P Q = [P,Q]=0$. Thus, the super-null Lie algebra of a static super-null Riemannian manifold consists of at least $P$ and $D$. As standard, we have a representation of the $N=1$, $d=1$ supertranslation algebra given by $[D,D] = - 2P$ and $[P,Q] =0$. Thus, we have established the following.
\begin{proposition}
Let $(M,g, Q,P)$ be a static super-null Riemannian manifold, then the $N=1$, $d=1$ supertranslation algebra is a Lie subsuperalgebra of $\mathfrak{snull}_(M, g, Q, P)$ realised by the vector fields $P$ and $D$.
\end{proposition}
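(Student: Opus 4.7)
My plan is to verify that $\mathrm{Span}_{\R}\{P, D\}$ sits inside $\mathfrak{scarr}(M,g,Q,P)$ and then compute its Lie brackets to recognise the $N=1$, $d=1$ supertranslation algebra. Most of the work has in fact already been done in the preceding propositions, so the argument is essentially a bookkeeping step.

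First I would verify that $P \in \mathfrak{scarr}_0(M,g,Q,P)$. The condition $\mathcal{L}_P g = 0$ holds by definition of \emph{static}, while $\mathcal{L}_P Q = [P,Q] = 0$ is one of the defining relations of the supertranslation algebra generated by $Q$ (recorded in the Observations following Definition \ref{def:SUSYCarBun}). Next, to place $D$ in $\mathfrak{scarr}_1(M,g,Q,P)$, I would invoke Proposition \ref{prop:SCarrAlg}(2): since $P$ is Killing by hypothesis, $\mathfrak{scarr}_1(M,g,Q,P) = \mathrm{Span}_{\R}\{D\}$, so in particular $\mathcal{L}_D g = 0$; and $\mathcal{L}_D Q = [D,Q]$ is identified through a direct computation in Shander coordinates, which I would carry out using $Q = \partial_\tau + \tau\partial_t$ and $D = \partial_\tau - \tau\partial_t$ and the fact that $\partial_\tau$ and $\partial_t$ commute as operators.

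Having confirmed membership, the final step is to exhibit the structure constants. In Shander coordinates a short calculation gives
$$D^2 = (\partial_\tau - \tau\partial_t)^2 = -\partial_t,$$
so $[D,D] = 2 D^2 = -2 P$. The remaining brackets $[P,D]=0$ and $[P,P]=0$ are immediate from $P = \partial_t$ and the independence of $D$ from the coordinate $t$. Thus the $\Z_2$-graded subspace $\mathrm{Span}_{\R}\{P\} \oplus \mathrm{Span}_{\R}\{D\}$ is closed under the superbracket and its commutation relations
$$[D,D] = -2 P, \qquad [P,D] = 0, \qquad [P,P] = 0,$$
are precisely (up to the conventional sign in front of $P$) those of the $N=1$, $d=1$ supertranslation algebra, completing the proof.

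There is no real obstacle here — the statement is essentially a corollary of Proposition \ref{prop:SCarrAlg} combined with the Shander-coordinate identities for $Q$ and $D$. The only point that deserves a moment of care is simply noting that although $Q$ itself is \emph{not} in $\mathfrak{scarr}$ (Proposition \ref{prop:DNotKilling}), the supersymmetric covariant derivative $D$ \emph{is}, and it reproduces the algebraic relations of $Q$ up to the sign $[D,D] = -[Q,Q]$, so the subsuperalgebra is abstractly isomorphic to the supertranslation algebra rather than literally a subalgebra generated by $Q$ itself.
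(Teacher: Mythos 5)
Your proposal is correct and follows essentially the same route as the paper, which establishes the result in the remarks immediately preceding the proposition: $P$ lies in $\mathfrak{scarr}_0$ because staticity gives $\mathcal{L}_P g=0$ and $[P,Q]=0$, $D$ lies in $\mathfrak{scarr}_1$ by Proposition \ref{prop:SCarrAlg}(2), and the relations $[D,D]=-2P$, $[P,D]=0$ exhibit the supertranslation algebra. Your closing remark distinguishing $D$ (which is in $\mathfrak{scarr}$) from $Q$ (which is not, by Proposition \ref{prop:DNotKilling}) is a sensible clarification consistent with the paper.
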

\begin{example}\label{exa:SUSYLie}
Continuing  the \hyperref[exm:SUSY]{Motivating Example}, the  supermanifold $\R^{4|1}$ can canonically be equipped with the degenerate metric by employing global Shander coordinates $(x^a, t , \tau)$ and defining
$$g   = \rmd x^a \otimes \rmd x^b \, \delta_{ba}  + 2 \, \rmd t \otimes \rmd \tau \, \tau - \rmd t \otimes \rmd t\,.$$
The reduced manifold is $\R^n$, and the pseudo-Riemannian metric is 
$$g_{red} =  \rmd x^a \otimes \rmd x^b \, \delta_{ba} - \rmd t \otimes \rmd t\,,$$
i.e.,  we have the usual Minkowski spacetime of signature $(3,1)$.  
Clearly, $P$ is Killing and we have a static super-null Riemannian manifold, so the super-null Lie algebra contains $D$. The isometries of $g_{red}$ are given by the Poincar\'e Lie algebra, i.e., $\mathfrak{iso}(g_{red})\simeq \mathfrak{so}(3,1)\ltimes \R^4$. However, we require the Lie subalgebra generated by $P_{red} = \partial_t$ and the other generators of the Poincar\'e Lie algebra that commute with $P_{red}$. The remaining transformations are spatial rotations, spatial translations, and temporal translations. Importantly, there are no boosts. Thus, 
$$\mathfrak{scarr}(\R^{4|1}) \simeq\big(\mathfrak{e}(3)\oplus \mathfrak{u}(1) \big)\oplus_{\textrm Ext} \R^{0|1}\,, $$
where $\oplus_{\textrm Ext}$ denotes the odd, non-central extension of the even algebra defined by $[D,D] = -2\,P$. This Lie superalgebra is not the super-Poincar\'e algebra.
\end{example}
\begin{example}
Let $(M_0, g_0)$ be a (pseudo-)Riemannian manifold with $\dim M_0 \geq 2$, and let $\mathfrak{iso}(g_0)$ be its isometry Lie algebra. We then equip $M_{red} := M_0 \times \R$ with the product metric $g_{red} := g_0 \oplus g_\R$, where $g_\R$ is the standard constant metric on $\R$.  An established result is that $\mathfrak{iso}(g_0\oplus g_\R) \simeq \mathfrak{iso}(g_0)\oplus \mathfrak{iso}(g_\R)$. Recall that $\mathfrak{iso}(g_R)$ consists of just translations and thus is identified with $\mathfrak{u}(1)$.  Then $M := M_0 \times \R^{1|1}$ is a super-null Riemannian manifold whose degenerate metric written in Shander coordinates is 
$$g = \rmd x^a \otimes \rmd x^b g_{ba}(x)- 2 \, \rmd t \otimes \rmd \tau \, \tau + \rmd t \otimes \rmd t\,,$$
where $x^a$ form a coordinate system on $M_0$. As $P = \partial_t$  is Killing, we have
$$\mathfrak{snull}(M_0\times \R^{1|1}) \simeq\big(\mathfrak{iso}(M_0)\oplus \mathfrak{u}(1) \big)\oplus_{\textrm Ext} \R^{0|1}\,, $$
where $\oplus_{\textrm Ext}$ denotes the odd, non-central extension of the even algebra defined by $[D,D] = -2\,P$.
\end{example}
%
%
\subsection{Connections on Super-Null Riemannian Manifolds}\label{subsec:ConnSUSYBun}
Affine connections on any (real and finite dimensional) supermanifold always exist. The question is one of compatibility conditions and how these affect the existence of these connections.
\begin{definition}
Let $(M, g, Q, P)$ be a super-null Riemannian manifold. An affine connection on the supermanifold $M$ is said to be
\begin{enumerate}[itemsep=0.5em]
\item \emph{supersymmetry compatible} if $Q$ is parallel, i.e., $\nabla_X Q =0$ for all $X \in \Vect(M)$;
\item \emph{metric compatible} if $X\langle Y|Z \rangle  = \langle \nabla_X Y|Z \rangle  + (-1)^{\widetilde{X}\, \widetilde{Y}} ~ \langle Y|\nabla_X Z \rangle$, for all $X,Y,Z \in \Vect(M)$;
\item \emph{compatible} if it is both supersymmetry compatible and metric compatible.
\end{enumerate}
\end{definition} 
\begin{remark}
As $M_{red}$ is a pseudo-Riemannian manifold (see Proposition \ref{prop:RedMisRie}), the reduced manifold canonically comes with the Levi-Civita connection. However, this connection does not by itself define an affine connection on $M$.
\end{remark}
\subsubsection*{Supersymmetry Compatible Connections}
We proceed to describe the fundamental properties of supersymmetry compatible connections and establish their existence.
\begin{proposition}
Let $(M, g, Q, P)$ be a super-null Riemannian manifold equipped with an affine connection $\nabla$. If the affine connection is supersymmetry compatible, then $\nabla$ cannot be torsionless.  
\end{proposition}
\begin{proof}
From the definition of the torsion tensor (see Definition \ref{def:SymCon}), we have 
$$T_\nabla(Q,Q) = \nabla_Q Q + \nabla_Q Q - [Q,Q] = - 2 P\,.$$
Thus, the torsion tensor does not vanish as $P$ is non-zero.
\end{proof}
In interpreting this result, the condition $\nabla_X Q =0$ means that $Q$ ``gives an odd straight direction''. However, the non-integrability of the kernel, thought of as a distribution, forces any supersymmetry compatible connection to carry torsion.\par 
Given any pair of vector fields $X,Y \in \Vect(M)$ and an affine connection we have the $C^\infty(M)$-linear map defined by the curvature, i.e.,  
$$R_\nabla(X,Y)  : \Vect(M) \longrightarrow \Vect(M)\,.$$
\begin{proposition}\label{prop:SUSYcondFlat}
Let $(M, g, Q, P)$ be a super-null Riemannian manifold equipped with an affine connection $\nabla$. If the affine connection is supersymmetry compatible, then $Q \in \ker\big( R_\nabla(X,Y)\big)$ for all pairs of vector fields $X,Y \in \Vect(M)$.
\end{proposition}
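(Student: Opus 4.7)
My plan is to simply expand the definition of the Riemann curvature tensor $R_\nabla(X,Y)$ acting on the vector field $Q$, and apply the hypothesis $\nabla_{(-)}Q = 0$ term by term. Since supersymmetry compatibility as defined in this subsection is the statement that $Q$ is parallel with respect to every direction, each of the three summands in the definition of $R_\nabla(X,Y)Q$ will vanish on its own, with no cancellations or interactions needed.

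Concretely, I would start from Definition \ref{def:RieCurTensor} with $Z$ replaced by $Q$, giving
\begin{equation*}
R_\nabla(X,Y)Q = \nabla_X(\nabla_Y Q) - (-1)^{\widetilde{X}\,\widetilde{Y}}\,\nabla_Y(\nabla_X Q) - \nabla_{[X,Y]}Q,
\end{equation*}
for homogeneous $X,Y \in \Vect(M)$. The hypothesis of supersymmetry compatibility gives $\nabla_Y Q = 0$, so the first term is $\nabla_X(0) = 0$; similarly the second term vanishes; and the third term is $\nabla_{[X,Y]} Q$, which again vanishes directly by supersymmetry compatibility applied to the (homogeneous) vector field $[X,Y]$. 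Hence $R_\nabla(X,Y)Q = 0$. Extension from homogeneous to inhomogeneous $X,Y$ is by bilinearity of the curvature map.

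There is essentially no obstacle here: the argument is a three-line computation. The only conceptual remark worth recording alongside the proof is that this is a purely algebraic consequence of $Q$ being parallel, and does \emph{not} use the metric, the bracket relation $[Q,Q]=2P$, or the non-degeneracy of the reduced metric from Proposition \ref{prop:RedMisRie}. In particular the conclusion continues to hold even though (as shown just above) any supersymmetry compatible $\nabla$ must carry nonzero torsion along $Q$.
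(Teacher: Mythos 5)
Your proof is correct and is essentially identical to the paper's: both expand $R_\nabla(X,Y)Q$ from Definition \ref{def:RieCurTensor} and note that each of the three terms vanishes term by term because $\nabla_{(-)}Q = 0$. Nothing further is needed.
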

\begin{proof}
From Definition \ref{def:RieCurTensor}, we have
$$ R_\nabla(X,Y)Q =  \nabla_X (\nabla_Y Q) - (-1)^{\widetilde{X}\, \widetilde{Y}}\, \nabla_Y (\nabla_X Q) - \nabla_{[X,Y]}Q\,.$$  
If the affine connection is supersymmetry compatible, i.e., $\nabla_X Q =0$, then $R_\nabla(X,Y)Q = 0$, for all $X,Y \in \Vect(M)$.
\end{proof}
The interpretation of Proposition \ref{prop:SUSYcondFlat} is that the affine connection is flat in the ``direction'' of $Q$ in the fibres. While this partial flatness condition is restrictive, we have the following theorem.
\begin{theorem}\label{thm:ExSUSYCon}
Supersymmetry compatible connections exist on  any super-null Riemannian manifold. 
\end{theorem}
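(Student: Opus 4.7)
The plan is a two-stage argument: build an explicit supersymmetry compatible affine connection in a single Shander chart, and then glue local prototypes via a partition of unity.

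First I would fix a Shander chart with coordinates $(x^a, t, \tau)$ in which $Q = \partial_\tau + \tau\,\partial_t$, and write down a minimal local affine connection $\nabla^{(U)}$ by prescribing all Christoffel symbols to vanish for pairs of coordinate vector fields, except for the single choice
$$\nabla^{(U)}_{\partial_\tau}\partial_\tau \;:=\; -\partial_t.$$
A direct calculation using the graded Leibniz rule of Definition \ref{def:AffCon} then shows that $\nabla^{(U)}_{\partial_A} Q = 0$ for every coordinate vector field $\partial_A$. For $A \in \{a, t\}$ this is immediate, since $\partial_A \tau = 0$ and all relevant Christoffel symbols vanish. The only case requiring care is $A = \tau$, where
$$\nabla^{(U)}_{\partial_\tau}(\tau\,\partial_t) \;=\; (\partial_\tau \tau)\,\partial_t + (-1)^{1\cdot 1}\,\tau\,\nabla^{(U)}_{\partial_\tau}\partial_t \;=\; \partial_t$$
cancels $\nabla^{(U)}_{\partial_\tau}\partial_\tau = -\partial_t$. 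By $C^\infty(M)$-linearity in the first slot, this extends to $\nabla^{(U)}_X Q = 0$ for every $X \in \Vect(M)$.

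Next I would cover $|M|$ by Shander coordinate charts $\{U_\alpha\}$ (available by Shander's theorem), build a local prototype $\nabla^{(\alpha)}$ on each as above, pick a partition of unity $\{\rho_\alpha\}$ subordinate to the cover (available on supermanifolds, cf.\ the reference to Leites in the footnote preceding Definition \ref{def:SymCon}), and set
$$\nabla \;:=\; \sum_\alpha \rho_\alpha \,\nabla^{(\alpha)}.$$
The identity $\sum_\alpha \rho_\alpha = 1$ makes the unwanted derivative terms $X(\rho_\alpha)\,f\,Y$ collapse, so $\nabla$ satisfies Definition \ref{def:AffCon}. Supersymmetry compatibility is then immediate: for every $X \in \Vect(M)$,
$$\nabla_X Q \;=\; \sum_\alpha \rho_\alpha \, \nabla^{(\alpha)}_X Q \;=\; 0,$$
each summand vanishing by Step 1.

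The main obstacle is really contained in the local construction rather than the globalisation: one has to find a consistent Christoffel prescription that annihilates $Q = \partial_\tau + \tau\,\partial_t$, a vector field whose coefficients mix coordinate directions. The fact that the lone non-zero choice $\nabla^{(U)}_{\partial_\tau}\partial_\tau = -\partial_t$ suffices is forced by $[Q,Q] = 2P \neq 0$ and is consistent with the torsion obstruction $T_\nabla(Q,Q) = -2P$ noted just before the theorem: the non-integrability of $\ker(g)$ must be absorbed by the connection, so the ``self-bracket'' of the odd direction dictates exactly the required Christoffel datum. Once the local model exists, the partition-of-unity step is purely formal and linear in the connection, which is the reason the argument works at all for a tensorial compatibility condition like $\nabla_X Q = 0$.
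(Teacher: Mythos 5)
Your proof is correct, but it takes a genuinely different route from the paper. The paper starts from an arbitrary affine connection $\nabla^0$ and corrects it globally in one step: since $Q$ is non-singular there is a one-form $\omega$ with $\omega(Q)=1$, and $\nabla_X Y := \nabla^0_X Y - \big(\nabla^0_X Q\big)\,\omega(Y)$ is then supersymmetry compatible by construction. You instead build an explicit local model in each Shander chart (all Christoffel symbols zero except $\nabla_{\partial_\tau}\partial_\tau = -\partial_t$, which is parity-consistent and does annihilate $Q = \partial_\tau + \tau\,\partial_t$ by the graded Leibniz computation you give) and glue by an even partition of unity, using that affine combinations of connections are connections and that the condition $\nabla_X Q = 0$ is preserved under such combinations because $\sum_\alpha \rho_\alpha = 1$. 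Both arguments are sound. The paper's version is coordinate-free, does not need Shander's theorem at this point, and directly exposes the non-uniqueness (one may add any $K$ with $K(X,Q)\in\ker(g)$), which is then exploited in the proof of the Main Theorem when the metric compatibility is layered on top; your version avoids having to produce the dual one-form $\omega$ and yields a concrete local normal form — essentially the connection appearing in Example \ref{exa:Connection} — at the cost of invoking Shander charts and a gluing step. One small caveat: your closing claim that the datum $\nabla_{\partial_\tau}\partial_\tau = -\partial_t$ is \emph{forced} is only heuristic — other choices work, e.g.\ adding anything proportional to $Q$ — but nothing in the proof depends on it.
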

\begin{proof}
Affine connections always exist on a supermanifold, and so we select one $\nabla^0$.  We 
then define a new connection given by $\nabla_X Y := \nabla^0_X Y + \Gamma(X,Y)$. Here $\Gamma$ is an even $(1,2)$-tensor.  Imposing the supersymmetry compatible condition, $\nabla_X Q := \nabla^0_X Q + \Gamma(X,Q) =0$, implies  $\Gamma(X,Q) = - \nabla^0_X Q $. As $Q$ is non-singular, the dual one-form  $\omega$ exists, that is, there is a one-form on $M$ such that $\omega(Q) =1$.  We can then define $\Gamma(X,Y) := \big(\nabla^0_X Q\big)\, \omega(Y)$. Thus, the affine connection 
$$\nabla_X Y :=   \nabla^0_X Y -  \big(\nabla^0_X Q\big)\, \omega(Y)\,,$$
exists and is a supersymmetry compatible connection.
\end{proof}
\begin{remark}
Note that the connection defined in the proof of Theorem \ref{thm:ExSUSYCon} is not unique and one can define $\nabla'_X Y :=   \nabla^0_X Y -  \big(\nabla^0_X Q\big)\, \omega(Y) + K(X,Y)$, where  $K$ is an even $(1,2)$-tensor such that $K(X,Q) \in \ker(g)$.
\end{remark}
\subsubsection*{Metric Compatible Connections}
We now repeat the analysis for metric compatible affine connection, accumulating with establishing their existence.
\begin{proposition}
Let $(M, g, Q, P)$ be a super-null Riemannian manifold equipped with a metric compatible connection. For every $X \in \Vect(M)$ there exists a function $f_X \in C^\infty(M)$ ($\widetilde{f_X} = \widetilde{X}$), such that $\nabla_X Q = f_X \, Q$.
\end{proposition}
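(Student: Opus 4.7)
The plan is to show that $\nabla_X Q$ lies in $\ker(g)$ by using metric compatibility, and then use the fact that $\ker(g)$ is a locally free $C^\infty(M)$-module of rank $0|1$ generated globally by $Q$.

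First I would invoke the metric compatibility condition with $Y = Q$ and an arbitrary $Z \in \Vect(M)$:
\begin{equation*}
X \langle Q | Z \rangle = \langle \nabla_X Q | Z \rangle + (-1)^{\widetilde{X}\,\widetilde{Q}} \langle Q | \nabla_X Z \rangle.
\end{equation*}
Since $Q \in \ker(g)$, the pairing $\langle Q | W \rangle$ vanishes for every $W \in \Vect(M)$. Applying this observation to $W = Z$ on the left-hand side and to $W = \nabla_X Z$ on the right-hand side, both of those terms vanish, yielding
\begin{equation*}
\langle \nabla_X Q | Z \rangle = 0 \qquad \textnormal{for all } Z \in \Vect(M).
\end{equation*}
Hence $\nabla_X Q \in \ker(g)$ by definition of the kernel.

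Next I would invoke the constant rank condition from Definition \ref{def:SUSYCarBun}: $\ker(g) = \Span\{Q\}$ as a locally free $C^\infty(M)$-module of rank $0|1$, generated globally by the non-singular vector field $Q$. Every element of $\ker(g)$ therefore has the form $f \cdot Q$ for a uniquely determined $f \in C^\infty(M)$. Applying this to $\nabla_X Q$ produces the desired function $f_X$ with $\nabla_X Q = f_X \, Q$. Finally, a parity count closes the proof: since $\nabla$ is parity-preserving, $\widetilde{\nabla_X Q} = \widetilde{X} + \widetilde{Q} = \widetilde{X} + 1$, while $\widetilde{f_X \, Q} = \widetilde{f_X} + 1$, forcing $\widetilde{f_X} = \widetilde{X}$.

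There is no real obstacle here; the argument is essentially an unpacking of the definitions. The only point to be slightly careful about is the global nature of the conclusion, which relies on $Q$ being a global non-singular generator of $\ker(g)$ (as opposed to merely a local generator). This is guaranteed by Definition \ref{def:SUSYCarBun}, so the formula $\nabla_X Q = f_X Q$ holds globally with $f_X$ a genuine smooth function on $M$ rather than a section of some sheaf that might not have a global representative.
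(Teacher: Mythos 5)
Your argument is correct and follows the same route as the paper: metric compatibility with $Q$ in the middle slot, together with $Q \in \ker(g)$, forces $\langle \nabla_X Q \mid Z\rangle = 0$ for all $Z$, and the rank condition $\ker(g) = \Span\{Q\}$ then gives $\nabla_X Q = f_X\, Q$. Your additional remarks on the parity of $f_X$ and the global generation of the kernel are correct and merely make explicit what the paper leaves implicit.
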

\begin{proof}
From the metric compatibility condition $X\langle Q|Y \rangle = \langle \nabla_X Q|Y \rangle =0$, for all $X,Y \in \Vect(M)$. As $\ker(g) = \Span \{Q\}$, there must exist a function $f_X \in C^\infty(M)$ with $\widetilde{f_X} = \widetilde{X}$ such that $\nabla_X Q = f_X \, Q$.
\end{proof}
\begin{proposition}
Let $(M, g, Q, P)$ be a super-null Riemannian manifold equipped with an affine connection $\nabla$. If the affine connection is metric compatible, then $\nabla$ cannot be torsionless.  
\end{proposition}
\begin{proof}
From the definition of the torsion tensor (see Definition \ref{def:SymCon}), we have 
$$T_\nabla(Q,Q) = \nabla_Q Q + \nabla_Q Q - [Q,Q] = 2 f_Q \, Q - 2 P\,.$$
For the torsion to vanish, we require either
\begin{enumerate}
\item   $f_Q =0$ and $P =0$, but this is impossible as $P$ is non-zero; or
\item   $f_Q \,Q = P$, but this is impossible as $Q$ is odd and $P$ is even, meaning that they are linearly independent.
\end{enumerate}
Thus, the torsion tensor does not vanish.
\end{proof}
Similarly to the case of supersymmetric affine connections, the fact that the kernel of the metric is non-integrable forces metric compatible affine connections to carry torsion.
\begin{proposition}\label{prop:MetcondDirc}
Let $(M, g, Q, P)$ be a super-null Riemannian manifold equipped with an affine connection. If the affine connection is metric compatible, then $Q$ is a generalised eigenvector of $R_\nabla(X,Y)$ for all pairs of vector fields $X,Y \in \Vect(M)$.
\end{proposition}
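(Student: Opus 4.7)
The plan is to reduce the claim to the previous proposition and then to a single graded computation. By the preceding proposition, metric compatibility implies the existence of functions $f_X \in C^\infty(M)$ with $\widetilde{f_X} = \widetilde{X}$ such that $\nabla_X Q = f_X \, Q$ for every homogeneous $X \in \Vect(M)$. I would substitute this relation directly into the definition of the curvature,
$$R_\nabla(X,Y) Q = \nabla_X(\nabla_Y Q) - (-1)^{\widetilde{X}\widetilde{Y}} \nabla_Y(\nabla_X Q) - \nabla_{[X,Y]} Q,$$
and expand each term via the graded Leibniz rule. For instance, $\nabla_X(\nabla_Y Q) = \nabla_X(f_Y Q) = X(f_Y)\, Q + (-1)^{\widetilde{X}\widetilde{Y}} f_Y\, \nabla_X Q = X(f_Y)\, Q + (-1)^{\widetilde{X}\widetilde{Y}} f_Y f_X\, Q$, and the symmetric term is handled analogously.

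The key observation is that the quadratic pieces in the $f$'s cancel. After the Leibniz expansion, the contribution of the second term of the curvature formula carries an extra $-(-1)^{\widetilde{X}\widetilde{Y}}$, producing $-(-1)^{\widetilde{X}\widetilde{Y}}(-1)^{\widetilde{Y}\widetilde{X}} f_X f_Y = -f_X f_Y$; this is matched against $(-1)^{\widetilde{X}\widetilde{Y}} f_Y f_X$ from the first term, and the two agree by the graded commutativity $(-1)^{\widetilde{X}\widetilde{Y}} f_Y f_X = f_X f_Y$ in $C^\infty(M)$. Also, $\nabla_{[X,Y]}Q = f_{[X,Y]}\, Q$. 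What remains is
$$R_\nabla(X,Y) Q = \lambda(X,Y)\, Q, \qquad \lambda(X,Y) := X(f_Y) - (-1)^{\widetilde{X}\widetilde{Y}} Y(f_X) - f_{[X,Y]} \in C^\infty(M),$$
which exhibits $Q$ as an eigenvector of $R_\nabla(X,Y)$ with eigenvalue the function $\lambda(X,Y)$; this is the sense in which $Q$ is a \emph{generalised} eigenvector.

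The only obstacle is the sign bookkeeping in the graded Leibniz rule, which is slightly delicate because $\widetilde{f_X} = \widetilde{X}$ injects sign factors at every differentiation; but no structural input beyond the preceding proposition is required. Conceptually, the map $X \mapsto f_X$ is a connection $1$-form on the rank-$0|1$ line distribution $\ker(g) = \Span\{Q\}$, and $\lambda$ is precisely its curvature.
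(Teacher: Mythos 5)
Your proposal is correct and follows essentially the same route as the paper: invoke the preceding proposition to write $\nabla_X Q = f_X\, Q$, substitute into the curvature formula, expand with the graded Leibniz rule, and observe that the quadratic terms in the $f$'s cancel by graded commutativity, leaving $R_\nabla(X,Y)Q = \big(X(f_Y) - (-1)^{\widetilde{X}\widetilde{Y}}Y(f_X) - f_{[X,Y]}\big)Q$. The sign bookkeeping is handled correctly, and your closing remark interpreting $X \mapsto f_X$ as a connection one-form on $\ker(g)$ with curvature $\lambda$ is a pleasant conceptual gloss not present in the paper.
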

\begin{proof}
From Definition \ref{def:RieCurTensor}, we have
$$ R_\nabla(X,Y)Q =  \nabla_X (\nabla_Y Q) - (-1)^{\widetilde{X}\, \widetilde{Y}}\, \nabla_Y (\nabla_X Q) - \nabla_{[X,Y]}Q\,.$$  
If the affine connection is metric compatible, i.e., $\nabla_X Q =f_X \, Q$, then 
\begin{align*}
R_\nabla(X,Y)Q & = \nabla_X (f_Y Q) - (-1)^{\widetilde{X}\, \widetilde{Y}}\, \nabla_Y (f_X Q) - f_{[X,Y]}Q \\
& = X(f_Y)Q  + (-1)^{\widetilde{X}\, \widetilde{Y}}\,f_Y f_X Q   - (-1)^{\widetilde{X}\, \widetilde{Y}}\,  Y(f_X) Q - f_X f_Y Q - f_{[X,Y]}Q\\
& = \big ( X(f_Y)  - (-1)^{\widetilde{X}\, \widetilde{Y}}\,  Y(f_X)  - f_{[X,Y]}\big )\, Q\,,
\end{align*}
for all $X,Y \in \Vect(M)$. Thus, $R(X,Y) Q = \hat{f}_{X,Y}\, Q$, where $\hat{f}_{X,Y} =X(f_Y)  - (-1)^{\widetilde{X}\, \widetilde{Y}}\,  Y(f_X) - f_{[X,Y]} \in C^\infty(M)$ is the generalised eigenvalue.
\end{proof}
Due to the degeneracy of the metric, the corresponding Koszul formula 
 \begin{align*}
2 \langle \nabla_X Y | Z  \rangle& = X\langle Y | Z   \rangle  + \langle [X,Y]| Z \rangle  + \langle T_\nabla(X,Y)|Z \rangle \\ \nonumber
 & +(-1)^{\widetilde{X}\, (\widetilde{Y} + \widetilde{Z}) }~ \big( Y \langle Z|X\rangle - \langle [Y,Z]| X \rangle + \langle T_\nabla(Y,Z)|X \rangle \big)\\ \nonumber
 &-(-1)^{\widetilde{Z}\, ( \widetilde{X} + \widetilde{Y} )}~ \big( Z \langle X|Y\rangle - \langle [Z,X]| Y \rangle + \langle T_\nabla(Z,X)|Y \rangle\big),
\end{align*}
does not fully determine a metric compatible connection. While $\langle \nabla_X Y | Z  \rangle $ is fully determined for a given metric compatible connection (assuming at least one exists), $\nabla_X Y$ is only determined up to a vector in $\ker(g)$. Thus, we cannot expect a direct analogue of the fundamental theorem of Riemannian supergeometry for super-null Riemannian manifolds. Nonetheless, we have the following theorem.
\begin{theorem}\label{thm:ExMetricCon}
Metric compatible affine connections exist on any super-null Riemannian manifold. 
\end{theorem}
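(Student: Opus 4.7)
The plan is to replicate the pattern of Theorem \ref{thm:ExSUSYCon}: start with an arbitrary affine connection $\nabla^0$ (whose existence is already guaranteed) and modify it by a $(1,2)$-tensor correction to enforce metric compatibility. I would introduce the \emph{metric deficit tensor}
$$\Theta(X,Y,Z) := X\langle Y|Z\rangle - \langle \nabla^0_X Y|Z\rangle - (-1)^{\widetilde{X}\widetilde{Y}}\langle Y|\nabla^0_X Z\rangle,$$
which is tensorial and graded-symmetric in $Y$ and $Z$. The task then reduces to finding an even $(1,2)$-tensor $A$ satisfying
$$\langle A(X,Y)|Z\rangle + (-1)^{\widetilde{X}\widetilde{Y}}\langle Y|A(X,Z)\rangle = \Theta(X,Y,Z),$$
for then $\nabla := \nabla^0 + A$ is manifestly metric compatible.

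My first move would be to note that this equation is affine in $A$: the set of solutions, once non-empty, is an affine subspace modelled on the $C^\infty(M)$-submodule of tensors annihilated by the symmetrization operator on the left-hand side. Hence, if one produces local solutions $A^i$ on an open cover $\{U_i\}$ and $\{\rho_i\}$ is a subordinate even partition of unity (available by the supermanifold analogue of the standard bump function construction cited in the footnote after Definition \ref{def:AffCon}), then $A := \sum_i \rho_i A^i$ is a well-defined global solution, since each $\rho_i(A^i - A^j)$ lies in the kernel of the symmetrization whenever $A^i - A^j$ does. Existence therefore reduces to a local construction.

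For the local problem I would work in Shander coordinates $(x^a,t,\tau)$ and use the explicit form of $g$ from Proposition \ref{prop:LocalFormg}. Provided $\dim M_{red}\neq 2$, the reduced metric is non-degenerate by Proposition \ref{prop:RedMisRie}, so I would lift its Levi-Civita Christoffel symbols to supply the purely body components $\Gamma^{c}_{ab},\Gamma^{t}_{ab},\Gamma^{c}_{at},\ldots$; the remaining coefficients carrying a $\tau$ index are largely unconstrained because $g_{\tau\tau}=0$ and $g_{\tau a},g_{\tau t}$ are each proportional to $\tau$, which reduces many of the compatibility equations to trivial identities. Setting the freely choosable coefficients to zero then produces an explicit metric compatible connection on the chart.

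The main obstacle I anticipate is the two-dimensional case, where $g_{red}$ may itself be degenerate (Example \ref{exm:dim2Dgen}) and the Levi-Civita shortcut is unavailable. Even there, however, the compatibility conditions in Shander coordinates form a small, fully explicit linear system in the Christoffel symbols which the degeneracy of $g$ renders underdetermined rather than inconsistent; a direct case analysis guided by Proposition \ref{prop:LocalFormg} should yield smooth local coefficients. Patching these via the partition of unity argument then delivers the desired global metric compatible affine connection, completing the proof.
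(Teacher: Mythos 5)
Your overall strategy coincides with the paper's: deform an arbitrary affine connection $\nabla^0$ by an even $(1,2)$-tensor $A$ subject to the algebraic equation $\langle A(X,Y)|Z\rangle + (-1)^{\widetilde{X}\widetilde{Y}}\langle Y|A(X,Z)\rangle = \Theta(X,Y,Z)$, where $\Theta$ is the non-metricity of $\nabla^0$. The partition-of-unity globalisation you add is sound, since the equation is $C^\infty(M)$-linear in $A$ and $\sum_i\rho_i=1$. The gap is in the local solvability step, which is where all the content of the theorem lives. Your concrete recipe --- lift the Levi-Civita symbols of $g_{red}$ and set the $\tau$-carrying coefficients to zero --- does not produce a metric compatible connection: some of those coefficients are \emph{forced} to be nonzero. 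Indeed, compatibility along $\partial_\tau$ applied to the pair $(\partial_t,\partial_\tau)$ reads
\begin{equation*}
\partial_\tau\langle \partial_t|\partial_\tau\rangle \;=\; \langle \nabla_{\partial_\tau}\partial_t|\partial_\tau\rangle + \langle \partial_t|\nabla_{\partial_\tau}\partial_\tau\rangle\,;
\end{equation*}
by Proposition \ref{prop:LocalFormg} the left-hand side equals $\pm g_{tt}$ while every term on the right not involving $\nabla_{\partial_\tau}\partial_\tau$ is $O(\tau)$, so $\langle \partial_t|\Gamma(\partial_\tau,\partial_\tau)\rangle$ is generically nonzero. This is exactly the term $\Gamma(\partial_\tau,\partial_\tau)=2\partial_t$ appearing in the paper's Example \ref{exa:Connection}.

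More importantly, neither ``underdetermined'' nor ``small explicit linear system'' establishes consistency: underdetermined linear systems can be inconsistent, and for a degenerate metric the map $g^\flat:\Vect(M)\to\Omega^1(M)$, $W\mapsto\langle W|\cdot\rangle$, is not surjective --- its image is precisely the annihilator of $\ker(g)=\Span\{Q\}$. So one must check that the one-forms which $\langle A(X,Y)|\cdot\rangle$ is required to reproduce actually annihilate $Q$. Tracking the graded symmetrisation, the sole obstruction is the value $\Theta(X,Q,Q)$, and a direct computation gives
\begin{equation*}
\Theta(X,Q,Q)= X\langle Q|Q\rangle - \langle \nabla^0_X Q|Q\rangle - (-1)^{\widetilde{X}}\langle Q|\nabla^0_X Q\rangle = 0\,,
\end{equation*}
since $Q\in\ker(g)$; this is the missing consistency argument. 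Note that this argument is uniform in $n$ and uses only the constant-rank hypothesis on $g$, so the special treatment you propose for $n=2$ (where $g_{red}$ may degenerate, cf.\ Proposition \ref{prop:RedMisRie}) is unnecessary once the argument is set up this way. As written, your case analysis is an assertion rather than a proof; with the $g^\flat$-image check added, the local step closes and your patching argument completes the theorem.
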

\begin{proof}
Affine connections always exist on a supermanifold, and so we select one $\nabla^0$. We make no assumption about the connection, such as being torsion-free or metric compatible. Thus, the non-metricity is measured by the following tensor
$$\big(\nabla^0_X g\big )(Y,Z) = X \langle Y| Z \rangle - \langle \nabla^0_X Y | Z\rangle - (-1)^{\widetilde{X}\widetilde{Y}}\, \langle  Y | \nabla^0_X Z\rangle\,.$$
We now define a new connection given by $\nabla_X Y := \nabla^0_X Y + \Gamma(X,Y)$, here $\Gamma$ is an even $(1,2)$-tensor.  Imposing the metric compatibility condition on $\nabla$ forces an algebraic constraint on $\Gamma$ which we will examine. Specifically,
$$X\langle Y|Z\rangle = \langle \nabla^0_X Y |Z  \rangle + \langle \Gamma(X,Y) |Z\rangle +(-1)^{\widetilde{X} \widetilde{Y}}\, \langle Y| \nabla^0_X Z \rangle + (-1)^{\widetilde{X} \widetilde{Y}}\, \langle Y| \Gamma(X,Z) \rangle\,.$$ 
Using the non-metricity of $\nabla^0$, the algebraic condition 
 $$\big(\nabla^0_X g \big)(Y,Z) = \langle \Gamma(X,Y) |Z\rangle + (-1)^{\widetilde{X} \widetilde{Y}}\, \langle Y| \Gamma(X,Z) \rangle\,,$$
is deduced. The degeneracy of the pseudo-inner product does not fully constrain the components of the tensor $\Gamma$; there is freedom in choosing components of $\Gamma$ that lie in the kernel of $g$. The number of components of $\Gamma$ is greater than the number of independent equations. The system of linear equations is underdetermined, and so a solution can always be found (there are, in fact, an infinite number of solutions). Thus, a metric compatible affine connection can always be constructed from an arbitrary affine connection, and so the theorem is established.
 \end{proof}
\begin{remark}
We stress that the connection built in the proof of Theorem \ref{thm:ExMetricCon} is far from unique. As the components of $\Gamma$ that lie the kernel of the metric are not constrained, one can always modify the metric compatible connection as $\nabla'_X Y =  \nabla^0_X Y + \Gamma(X,Y)+ K(X,Y)$, where $K(X,Y)$ is an even $(1,2)$-tensor such that $K(X,Y) \in \ker(g)$ for all $X,Y  \in \Vect(M)$. This freedom will play a vital role in constructing compatible connections. 
\end{remark}
\subsubsection*{Compatible Connections}
Combining the results of this subsection, we are led to the main theorem of this paper. We observe that for compatible connections, we must have $f_X =0$ for all vector fields $X \in \Vect(M)$; while this is consistent, it is not immediate that such affine connections can be found.  
\begin{main} \label{mtrm:ComConExist}
Compatible affine connections exist on any super-null Riemannian manifold.
\end{main}
\begin{proof}
Starting from an arbitrary affine connection $\nabla^0$, the proof of Theorem \ref{thm:ExSUSYCon} show that we have a supersymmetric compatible connection given by
$$\nabla^1_X Y :=   \nabla^0_X Y -  \big(\nabla^0_X Q\big)\, \omega(Y)\,.$$
The proof of Theorem \ref{thm:ExMetricCon} allows us to  amend $\nabla^1$ to obtain a metric compatible connection given by
$$\nabla_X Y :=  \nabla^1_X Y + \Gamma(X,Y)\,.$$
We now need to impose the supersymmetric compatibility condition to further constrain $\Gamma$.  Directly, $\nabla_X Q =  \nabla^1_X Q + \Gamma(X,Q) =0$, which implies 
$$\Gamma(X,Q)=0\,,$$
for all $X \in \Vect(M)$. Next, we need to argue that this extra constraint can be satisfied while not destroying the metric compatibility constraint.  The non-metricity of $\nabla^1$,
$$(\nabla^1_X g)(Y,Z) = X \langle Y |Z \rangle -  \langle \nabla^1_X Y | Z\rangle - (-1)^{\widetilde{X} \widetilde{Y}}\, \langle Y | \nabla^1_X Z \rangle\,,$$ 
together with the algebraic condition on $\Gamma$,
$$(\nabla^1_X g)(Y,Z) =  \langle\Gamma(X,Y)| Z \rangle + (-1)^{\widetilde{X} \widetilde{Y}}\, \langle Y | \Gamma(X,Z)\rangle\,,$$ 
implies the following;
$$(\nabla^1_X g)(Q,Z) = - \langle \nabla^1_X Q | Z\rangle  =  \langle\Gamma(X,Q)| Z \rangle =0 \,,$$
as $\nabla^1_X Q =0$ by construction. Thus, $\Gamma(X,Q) \in \ker(g)$ for all $X \in \Vect(M)$. The proof of Theorem \ref{thm:ExMetricCon} shows that the components of $\Gamma$ that lie in the kernel of the metric are not constrained by the metric compatibility. Thus, we can choose $\Gamma(X,Q) =0$ and still have metric compatibility.  This establishes the result. 
\end{proof}
\begin{example}\label{exa:Connection}
Continuing Example \ref{exa:SUSYLie},  the  supermanifold $\R^{4|1}$ can canonically be equipped with the degenerate metric by employing global Shander coordinates $(x^a, t , \tau)$ and defining 
$$g   = \rmd x^a \otimes \rmd x^b \, \delta_{ba}  + 2 \, \rmd t \otimes \rmd \tau \, \tau - \rmd t \otimes \rmd t\,.$$
As we have a superdomain, we can chose $\nabla^0$ to be the trivial connection and globally set $\omega = \rmd \tau$ (understood as an odd one-from). Then
$$\nabla^1_X Y := X(Y)- X(\tau)\, \rmd \tau(Y)\partial_t\,,$$
defines a supersymmertic compatible connection. However, this connection is not metric compatible. A minimal choice of $\Gamma$ is $\Gamma(\partial_\tau,\partial_\tau) = 2 \partial_t$ and all other components are zero. Then 
$$\nabla_X Y := X(Y)- X(\tau)\, \rmd \tau(Y)\partial_t + \Gamma(X,Y)\,,$$
is a compatible affine connection. In this specific case, the non-vanishing component of the torsion is identified with $\Gamma(\partial_\tau,\partial_\tau)$ which is non-vanishing.
\end{example}
%
%
\section{Concluding Remarks}\label{sec:ConRem}
An odd analogue of a Carrollian manifold has been constructed and studied; we refer to such supermanifolds as super-null manifolds. We have shown that supersymmetry compatible and metric compatible connections exist on any super-null Riemannian manifold, and importantly, that compatible connections always exist, i.e., affine connections that are both supersymmetry compatible and metric compatible can always be constructed.  It is the freedom in defining a connection, thanks to the degeneracy of the metric, that allows these two conditions to be simultaneously satisfied. It was argued by Bekaert \& Morand \cite{Bekaert:2018} that only invariant Carrollian manifolds, i.e., Carrollian manifolds for which the fundamental/Carrollian vector field is Killing, can admit torsion-free compatible connections.  This is in stark contrast with super-null Riemannian manifolds, where the supersymmetry generator $Q$ cannot be Killing, and compatible connections must carry torsion. The two geometries are fundamentally different; the non-integrability of the kernel of the super-case is the root of these differences.\par 
The Lie superalgebra of infinitesimal automorphisms of a super-null Riemannian manifold, which we referred to as the super-null Lie algebra, has been studied. An interesting result is that this Lie superalgebra is finite-dimensional and tightly tied to the Lie algebra of Killing vector fields of the reduced metric. In the classical setting of weak Carrollian manifolds, the Lie algebra of infinitesimal automorphisms is infinite-dimensional. When the extra condition of preserving a compatible connection is imposed, the Lie algebra becomes finite-dimensional; see Duval et al. \cite{Duval:2014} and references therein for details. \par
While the main motivation for this work is rooted in mathematical curiosity, further explicit examples of super-null Riemannian manifolds are desirable and could expose applications thereof in physics. It may be possible to formulate a superspace version of non-holonomic supermechanics where the dynamics generated by $Q$ could depend on the extra even coordinates understood as ``external parameters''; which might be interpreted as the classical background fields or slowly varying time-dependent parameters of the physical system. This could provide a novel way to study supersymmetric mechanical systems with evolving external parameters. For example, there may be supersymmetric analogues of geometric phases, such as the Hannay angle (see \cite{Hannay:1985}), underlying these models. 
%
%
\section*{Acknowledgments}
The author extends his gratitude to Steven Duplij and Janusz Grabowski for their encouragement.    
%
%

\end{document}